\documentclass[11pt]{article}
\usepackage{amsmath,amsthm,amsfonts,amssymb,amscd, amsxtra,color}
\usepackage[active]{srcltx}
\usepackage{url}
\usepackage{cite}
 \usepackage{graphicx}
\usepackage{multirow}
\usepackage[margin=2.5cm,nohead]{geometry}
\theoremstyle{plain}
\theoremstyle{definition}
\newtheorem{theorem}{Theorem}
\newtheorem{lemma}{Lemma}
\newtheorem{definition}{Definition}
\newtheorem{corollary}{Corollary}
\newtheorem{proposition}{Proposition}

\newtheorem{remark}{Remark}
\newtheorem{example}{Example}
\newcommand{\beq}{\begin{equation}}
\newcommand{\eeq}{\end{equation}}
\newcommand{\tp}{^\top}
\newcommand{\p}{\partial}
\newcommand{\lng}{\langle}
\newcommand{\rng}{\rangle}
\newcommand{\lf}{\left}
\newcommand{\rg}{\right}
\newcommand{\f}{\frac}

\def\argmin{\operatorname{argmin}}

\def\min{\operatorname{min}}
\def\max{\operatorname{max}}

\DeclareMathOperator{\diag}{diag}
\DeclareMathOperator{\inte}{int}

\DeclareMathOperator{\dist}{d}
\DeclareMathOperator{\sgn}{sgn}

\newcommand{\pr}{\perp}
\newcommand{\mc}{\mathcal}

\newcommand{\SP}{\mathbb S}
\newcommand{\R}{\mathbb R}


\begin{document}
\title{On the spherical  quasi-convexity of quadratic functions \\ on   spherical self-dual  convex sets}

\author{O. P. Ferreira\thanks{IME/UFG, Avenida Esperan\c{c}a, s/n, Campus II,  Goi\^ania, GO - 74690-900, Brazil (E-mails: {\tt orizon@ufg.br}).}
\and
S. Z. N\'emeth \thanks{School of Mathematics, University of Birmingham, Watson Building, Edgbaston, Birmingham - B15 2TT, United Kingdom
(E-mail: {\tt s.nemeth@bham.ac.uk}, {\tt LXX490@student.bham.ac.uk}).}
\and
L. Xiao  \footnotemark[3]
}
\maketitle
\begin{abstract}
In this paper,  the spherical quasi-convexity of quadratic functions on spherically subdual convex sets is
studied. Sufficient conditions for spherical quasi-convexity on spherically subdual convex sets are presented. A partial characterization of spherical quasi-convexity on spherical Lorentz sets is given and some examples are provided.
\end{abstract}
\noindent
{\bf Keywords:} Spherical quasi-convexity,   quadratic function, self-dual cone.\\
{\bf AMS subject classification:}. 26B25 \,$\cdot$\, 90C25

\section{Introduction}
The aim of this paper is to  study theoretical properties of spherical quasi-convexity of quadratic functions on spherically subdual convex sets. It is well known that quadratic functions  play an important role in nonlinear programming theory.   For instance,  the minimization problem of
 quadratic functions on the sphere occurs as a subproblem in methods of nonlinear programming  (see the background  section of
\cite{RendlWolkowicz1997}  for an extensive review of the literature on the subject).     We are interested in the problem  of minimizing a quadratic function, defined by the symmetric matrix $ Q $, constrained to a subset $ C $ of the sphere. This problem is a quadratic constrained optimization problem on the sphere, and it is also a minimum eigenvalue problem in $C$. It is essential to emphasize that there exists a special case,  when $C$ is the intersection of the Lorentz cone with the sphere. This special case is of particular interest because  the minimum eigenvalue of $Q$ in $C$ is non-negative,  if and only if the matrix $Q$ is Lorentz copositive; see  \cite{LoewySchneider1975,GajardoSeeger2013}.  In general, changing the Lorentz cone by an
arbitrary closed and convex cone $K$ would lead to a more general concept of $K$-copositivity, thus our study is anticipated to initialize new perspectives for investigating the general copositivity of a symmetric matrix. In general, exploiting the specific intrinsic geometric and algebraic structure of problems posed on the sphere can significantly lower down the cost of finding solutions; see \cite{Hager2001, HagerPark2005, Smith1994,So2011,Zhang2003,Zhang2012}. We know that  a strict local minimizer of a spherically quasi-convex quadratic function is also a strict global minimizer, which makes interesting and natural to refer the problem about characterizing the spherically quasi-convex quadratic functions on spherically convex sets.

	The aim of this paper is to introduce both sufficient conditions and necessary conditions for quadratic functions to be spherically quasi-convex on  spherically subdual convex sets. In particular, several examples are presented.  This paper continues the study of
\cite{FerreiraNemethXiao2018}, which can be regarded as premier study about the topic of quasi-convexity of quadratic functions on
the Euclidean space. The main literature about the quasi-convexity of quadratic functions on Euclidean convex sets includes, but it is not
limited to  \cite{Martos1969,Ferland1972,Schaible1981,Komlosi1984,Karamardian1993}.

	The remaining part of this paper is structured as follows. Section~\ref{sec:int.1} presents previous results and notations,  that will be used throughout this paper. In Section~\ref{sec:qcqfcs},  we recall the fundamental properties of spherically quasi-convex functions on spherically convex sets and,  in Section~\ref{sec:sqcqfcs},   particular versions of these conditions for  quadratic spherically quasi-convex functions.
Section~\ref{sec:qcqfcsd} provides derivations of many useful properties of spherically quasi-convex functions on spherically subdual convex sets.
In Section~\ref{sec:qcqflc},  we prove a condition partially characterizing the  spherical quasi-convexity of
quadratic functions on  spherically convex sets associated  with  the Lorentz  cone.  Perspectives and open problems are presented in Section~\ref{sec:pop} and Section~\ref{sec:Conclusions}  concludes  the paper.

\section{Terminology and Basics Results} \label{sec:int.1}
In this section, we introduce some notations and present the previous results used throughout the paper. Let $\R^n$ be the $n$-dimensional Euclidean
space with the canonical inner product $\langle\cdot,\cdot\rangle$ and  norm $\|\cdot\|$.  For  a set $D\subset \R^n$, denote by  $[D]$  the  {\it
span} of a set $D$, i.e.,   the smallest linear subspace of $\R^n$  that contains the set $D$. For a vector subspace  $\mathbb{V} \subset \R^n$,
denote by $ \mathbb{V}^\pr=\{x\in\R^n:~\lng v,x\rng=0, ~\forall ~ v \in \mathbb{V}\}$ its orthogonal complement in $\R^n$. In particular, for
simplifying the notation, for a vector $u\in\R^n$ we set $[u]:=[\{u\}]$ and $[u]^\pr:=[\{u\}]^\pr$. Denote by $\R^{m \times n}$ the set of all $m\times n$
matrices with real entries, $\R^n\equiv \R^{n \times 1}$,   by $e^i$ the $i$-th canonical unit vector in $\R^n$, and by ${\rm I_n}$  the $n\times
n$ identity matrix.  A set ${\cal K}\subseteq\R^{n}$ is called a {\it cone} if for any $\alpha> 0$ and $x\in \cal{K}$ we have $\alpha x\in \cal{K}$. A cone
${\cal K}\subseteq\R^n$ is called a {\it convex cone} if for any $x,y \in\cal{K}$, we have $x + y \in \cal{K}$. The {\it dual cone} of a cone ${\cal{K}} \subseteq
\R^n$ is the closed convex cone ${\cal{K}}^*\!\!:=\!\{ x\in \R^n :~ \langle x, y \rangle\!\geq\! 0, ~ \forall \, y\!\in\! {\cal{K}}\}.$  A cone $ {\cal K}
\subseteq \R^{n}$  is called {\it pointed} if $ {\cal K} \cap {(-\cal K)} \subseteq \{0\}$. A pointed closed convex cone is called \emph{proper
cone} if it has a nonempty interior. The cone ${\cal K}\subseteq\R^n$ is called {\it subdual}
if ${\cal K}\subseteq {\cal K}^*$ and {\it self-dual} if ${\cal K}={\cal K}^*$. The {\it Lorentz cone}  is defined by
\begin{equation*} \label{eq:LorentzCone}
	{\cal L}:=\left\{x=(x_1,\dots,x_n)^\top\in\R^n:~x_1\geq\sqrt{(x_2)^2+\dots+(x_n)^2}\right\}, 
\end{equation*}
which  is a special case of the elliptic cone defined by
$$
	{\cal E}:=\left\{x\in\R^n:~\lng v^1,x\rng\geq  \sqrt{\theta_2\lng v^2,x\rng^2+\dots+\theta_{n}\lng
	v^n,x \rng^2}\right\}, 
$$
where $ \theta_2, \ldots,\theta_n$ are  positive real  numbers  and $v^1, v^2, \ldots, v^n$ are L.I. vectors in $\R^n$.  We recall that the Lorentz cone ${\cal L}$ and the non-negative orthant   $\R^n_{+}$ are  self-dual cones.
Let ${\cal K}$ be a closed convex cone.    Let $x\in \R^n$, then  the {\it projection $P_{\cal K}(x)$ of the point $x$ onto the cone  \( {\cal K}\)} is defined by 
$$
P_{\cal K}(x):=\argmin \left\{\|x-y\|~:~ y\in {\cal K}\right\}.
$$
For any $x\in {\cal K}$, we define the non-negative part of $x$, nonpositive part of $x$ and the absolute value of $x$ with respect to ${\cal K}$ by
\begin{equation} \label{eq;npav}
	x_+^{\cal K}:={\rm P}_{\cal K}(x), \qquad x_-^{\cal K}:={\rm P}_{{\cal K}^*}(-x), \qquad  |x|^{\cal K}:=x_+^{\cal K}+x_-^{\cal K},
\end{equation}
respectively. We recall   from   Moreau's  decomposition theorem \cite{Moreau1962} (see also  \cite[Theorem 3.2.5]{HiriartLemarecal1}),   that
for a closed convex cone ${\cal K}$  there hold:
\begin{equation} \label{eq;cmdt}
	x=x_+^{\cal K}-x_-^{\cal K}, \qquad \left\langle x_+^{\cal K}, x_-^{\cal K} \right\rangle=0,   \qquad \qquad  ~x\in\R^n.
\end{equation}
For any $z\in \mathbb{R}\times
{\mathbb{R}}^{n-1}$, let
\(z:=(z_1, {z^2})  \in \mathbb{R}\times {\mathbb{R}}^{n-1} \), where ${z^2}:= (z_2, z_3, \dots, z_n)^{\tp}$.
An explicit formula for the projection mapping    \({\rm P}_{\cal{L}}\)  onto the  Lorentz cone  ${\cal L}$ is given in  \cite[Proposition
3.3]{FukushimaTseng2002}, which is recalled for the case when $x\notin{\cal L}\cup -{\cal L}$ in the following lemma.
\begin{lemma} \label{l:projude-}
Let \(x=(x_1, {x^2})  \in  \{(y_1, {y^2}) \in \R \times \R^{n-1}: ~ |y_1|<\|{y^2}\|\}  \) and \({\cal{L}}\) be the Lorentz cone. Then,
$$
x_+^{\cal L}= \lf(\frac{x_1+\|{x^2}\|}{2\|x^2\|}\rg)\left(\|x^2\|,x^2\right), \qquad
x_-^{\cal L}= \lf(\frac{-x_1+\|{x^2}\|}{2\|x^2\|}\rg)\left(\|x^2\|,-x^2\right)
$$
and, as a  consequence, the absolute value of $x$ with respect to ${\cal L}$ is given by
\[|x|^{\cal L} = \frac1{\|x^2\|}\left(\|{x^2}\|^2,x_1x^2\right).\]
\end{lemma}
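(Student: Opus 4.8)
The plan is to verify the claimed formulas directly against the defining relations of the Moreau decomposition, rather than solving the projection optimization problem from scratch. Since the Lorentz cone $\mathcal{L}$ is self-dual, we have $\mathcal{L}^*=\mathcal{L}$, so both $x_+^{\mathcal{L}}=P_{\mathcal{L}}(x)$ and $x_-^{\mathcal{L}}=P_{\mathcal{L}}(-x)$ lie in $\mathcal{L}$. Moreau's decomposition theorem asserts that $x$ decomposes \emph{uniquely} as $x=a-b$ with $a\in\mathcal{L}$, $b\in\mathcal{L}^*=\mathcal{L}$, and $\langle a,b\rangle=0$, and that in this unique decomposition one has $a=x_+^{\mathcal{L}}$ and $b=x_-^{\mathcal{L}}$. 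Hence it suffices to take $a$ and $b$ equal to the two candidate vectors in the statement and check the relations $a\in\mathcal{L}$, $b\in\mathcal{L}$, $x=a-b$, and $\langle a,b\rangle=0$.

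First I would record that the hypothesis $|x_1|<\|x^2\|$ forces both scalar coefficients $(x_1+\|x^2\|)/(2\|x^2\|)$ and $(-x_1+\|x^2\|)/(2\|x^2\|)$ to be strictly positive, and in particular $x^2\neq 0$, so the formulas are well defined. For cone membership, the candidate $a$ has first entry $(x_1+\|x^2\|)/2$, while the norm of its remaining block equals $\big((x_1+\|x^2\|)/(2\|x^2\|)\big)\|x^2\|=(x_1+\|x^2\|)/2$; thus its first entry equals the norm of its tail, placing $a$ on the boundary of $\mathcal{L}$, and the same computation applies to $b$. The identity $x=a-b$ is then a short coordinatewise check: the first entries subtract to $x_1$, and the tails combine, after the sign flip built into $b$, to $x^2$.

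The only structurally interesting point is the orthogonality $\langle a,b\rangle=0$. Here the key is the cancellation $\langle(\|x^2\|,x^2),(\|x^2\|,-x^2)\rangle=\|x^2\|^2-\|x^2\|^2=0$, which makes the inner product vanish irrespective of the (positive) scalar prefactors. With all four relations verified, the uniqueness clause of Moreau's theorem identifies $a$ and $b$ with $x_+^{\mathcal{L}}$ and $x_-^{\mathcal{L}}$, respectively. Finally, the formula for $|x|^{\mathcal{L}}=x_+^{\mathcal{L}}+x_-^{\mathcal{L}}$ follows by adding the two expressions: the first entries sum to $\|x^2\|$ and the tails combine to $(x_1/\|x^2\|)x^2$, which matches $\frac{1}{\|x^2\|}(\|x^2\|^2,x_1x^2)$.

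I do not anticipate a genuine obstacle; the verification is elementary once one commits to checking the Moreau relations instead of re-deriving the projection. The one place requiring care is the positivity and sign bookkeeping enforced by $|x_1|<\|x^2\|$, which is precisely what guarantees that neither part degenerates and that both candidates land on $\mathcal{L}$ rather than in its interior or outside it.
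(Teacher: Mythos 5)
Your proposal is correct, and all four verifications (membership of both candidates in $\partial\mathcal{L}$, the identity $x=a-b$, and the orthogonality $\langle a,b\rangle=0$) check out under the hypothesis $|x_1|<\|x^2\|$, which indeed guarantees positivity of both scalar prefactors. However, it takes a genuinely different route from the paper: the paper does not prove this lemma at all, but merely \emph{recalls} the explicit projection formula from Proposition 3.3 of Fukushima, Luo and Tseng (2002), specialized to the case $x\notin\mathcal{L}\cup-\mathcal{L}$. Your argument replaces that external derivation with a self-contained verification resting on the uniqueness clause of Moreau's decomposition: if $x=a-b$ with $a\in\mathcal{K}$, $b\in\mathcal{K}^*$ and $\langle a,b\rangle=0$, then necessarily $a=P_{\mathcal{K}}(x)$ and $b=P_{\mathcal{K}^*}(-x)$. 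Note that this converse direction is slightly more than the two identities the paper displays in \eqref{eq;cmdt}, but it is part of the full statement of Moreau's theorem as cited there, so the reliance is legitimate. What your approach buys is independence from the reference and transparency about exactly where the hypothesis $|x_1|<\|x^2\|$ enters (nondegeneracy of the prefactors and placement of both parts on the boundary of the cone); what it costs is that it only \emph{verifies} a formula one must already have guessed, whereas the cited source derives it. The final addition giving $|x|^{\mathcal{L}}=x_+^{\mathcal{L}}+x_-^{\mathcal{L}}=\frac{1}{\|x^2\|}\left(\|x^2\|^2,\,x_1x^2\right)$ is a correct coordinatewise computation.
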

For a general nonzero vector \(x=\lf(x_1, {x^2}\rg)  \in  \R \times \R^{n-1}\) the absolute value of $x$ with respect to ${\cal L}$ is given in
the next
lemma, which follows immediately from Lemma \ref{l:projude-} and equations \eqref{eq;cmdt}.
\begin{lemma} \label{l:projude}
Consider a nonzero vector \(x=\lf(x_1, {x^2}\rg)  \in  \R \times \R^{n-1}\) and let \({\cal{L}}\) be the Lorentz cone. Then, the absolute value of $x$  is given by
\[|x|^{\cal L} = \frac1{\|x^2\|}\Big{(}\max\left(|x_1|,\|{x^2}\|\right)\|x^2\|,~\min(|x_1|,\|x^2\|)\sgn(x_1)x^2\Big{)},\]
where  $\sgn(x_1)$ is equal to $-1$, $0$ or $1$ whenever  $x_1$ is negative, zero or positive, respectively.  
\end{lemma}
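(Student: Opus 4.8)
The plan is to reduce the general identity to a case analysis according to the position of $x=(x_1,x^2)$ relative to ${\cal L}\cup(-{\cal L})$, treating the three regimes $x\in{\cal L}$, $x\in-{\cal L}$, and $|x_1|<\|x^2\|$ separately, and then checking that in each regime the claimed expression collapses to the correct value of $|x|^{\cal L}$. Since by hypothesis $x\neq 0$ and the factor $1/\|x^2\|$ appears, I would work under the tacit assumption $x^2\neq 0$, handling the degenerate case $x^2=0$ (where $x=\pm(|x_1|,0)$ lies in $\pm{\cal L}$) directly.

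First I would recall that ${\cal L}$ is self-dual, so ${\cal L}^*={\cal L}$ and the definitions in \eqref{eq;npav} reduce to $x_+^{\cal L}={\rm P}_{\cal L}(x)$ and $x_-^{\cal L}={\rm P}_{\cal L}(-x)$. The key fact I would use is that the Moreau decomposition \eqref{eq;cmdt} is unique: if $x=a-b$ with $a\in{\cal L}$, $b\in{\cal L}^*$ and $\langle a,b\rangle=0$, then necessarily $a=x_+^{\cal L}$ and $b=x_-^{\cal L}$. For $x\in{\cal L}$ I would write $x=x-0$; as $x\in{\cal L}$, $0\in{\cal L}^*$ and $\langle x,0\rangle=0$, uniqueness yields $x_+^{\cal L}=x$ and $x_-^{\cal L}=0$, so $|x|^{\cal L}=x$. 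For $x\in-{\cal L}$ (hence $-x\in{\cal L}$) the symmetric decomposition $x=0-(-x)$ gives $x_+^{\cal L}=0$ and $x_-^{\cal L}=-x$, so $|x|^{\cal L}=-x$. The remaining regime $|x_1|<\|x^2\|$ is exactly $x\notin{\cal L}\cup(-{\cal L})$, and there I would simply invoke Lemma \ref{l:projude-}, which gives $|x|^{\cal L}=\tfrac1{\|x^2\|}(\|x^2\|^2,x_1x^2)$.

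It then remains to verify that the single right-hand side of the claimed identity reproduces each of these three values, which is the only genuine bookkeeping in the argument. One uses that $x\in{\cal L}$ forces $x_1\geq\|x^2\|\geq 0$, so $\max(|x_1|,\|x^2\|)=x_1$, $\min(|x_1|,\|x^2\|)=\|x^2\|$ and $\sgn(x_1)=1$, turning the formula into $(x_1,x^2)=x$; that $x\in-{\cal L}$ forces $x_1\leq-\|x^2\|\leq 0$, so the corresponding factors produce $(-x_1,-x^2)=-x$; and that $|x_1|<\|x^2\|$ makes $\max(|x_1|,\|x^2\|)=\|x^2\|$, $\min(|x_1|,\|x^2\|)=|x_1|$ and $|x_1|\sgn(x_1)=x_1$, which reproduces precisely the expression from Lemma \ref{l:projude-}.

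The main obstacle here is essentially nonexistent in terms of deep content; the only care required is the sign-and-extremum bookkeeping across the three cases and the observation that the three a priori distinct outputs $x$, $-x$, and $\tfrac1{\|x^2\|}(\|x^2\|^2,x_1x^2)$ are all captured by one closed form. The three cases are exhaustive and (for $x\neq 0$) pairwise disjoint, since ${\cal L}$ is pointed so ${\cal L}\cap(-{\cal L})=\{0\}$, which closes the argument.
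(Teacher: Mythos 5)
Your proposal is correct and follows exactly the route the paper intends: the paper gives no written proof beyond the remark that the lemma ``follows immediately from Lemma~\ref{l:projude-} and equations \eqref{eq;cmdt}'', and your case split into $x\in{\cal L}$, $x\in-{\cal L}$ (handled via uniqueness of the Moreau decomposition) and $|x_1|<\|x^2\|$ (handled by Lemma~\ref{l:projude-}), followed by the sign-and-extremum bookkeeping, is precisely the elaboration of that remark. Your explicit treatment of the degenerate case $x^2=0$ is a welcome extra care that the paper's statement glosses over.
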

Let \( {\cal K} \subseteq \R^{n}\)  be a (not necessarily convex) cone.  Let us recall that   $A\in \R^{n \times n}$ is  ${\cal K}$-{\it copositive}
if  $ \langle Ax, x \rangle\!\geq\! 0$ for all $x\in {\cal K}$  and  a {\it Z-matrix} is a matrix with nonpositive off-diagonal elements.
It is easy to see that the Lorentz cone \({\cal{L}}\) can be written as
\begin{equation*} \label{eq:LorentzConeJ}
	{\cal L}:=\left\{x=(x_1,\dots,x_n)^\top\in\R^n:~x_1\geq 0,\textrm{ }\lng Jx,x\rng\ge0\right\},
\end{equation*}
where $J=\diag(1,-1,\dots,-1)\in\R^{n\times n}$.
It is easy to see that \[{\cal L}\cup-{\cal L}=\left\{x=(x_1,\dots,x_n)^\top\in\R^n:\lng Jx,x\rng\ge0\right\}.\]
	This straightforwardly implies that  $A\in \R^{n \times n}$ is ${\cal L}$-copositive if and only if it is ${\cal L}\cup-{\cal L}$-copositive.
Hence, the S-Lemma (see \cite{Jabukovich1971,Jabukovich1977,PolikTerlaky2007}) implies:
\begin{lemma}\label{lorcop}
	$A\in \R^{n \times n}$ is ${\cal L}$-copositive if and only if there exists a $\rho\ge0$ such that $A-\rho J$ is positive
	semidefinite.
\end{lemma}
	This result is well-known and it is remarked in Lemma 2.2 of \cite{LoewySchneider1975}.
Let ${\cal K} \subseteq \R^n $ be  a  pointed closed convex cone  with nonempty interior,  the  {\it ${\cal K}$-Z-property} of a matrix $A\in\R^{n\times n}$ means that
$ \langle Ax,y\rangle \le0$, for any $(x,y)\in C({\cal K})$, where $C({\cal K}):=\{(x,y)\in\R^n\times\R^n:~x\in {\cal K},\textrm{ }y\in {\cal
K}^*,  \langle x, y \rangle=0\}$.  Throughout the paper the {\it \(n\)-dimensional  Euclidean sphere}  $\SP^{n-1}$ is  denoted by
\[
\SP^{n-1}:=\left\{ x=(x_1, \ldots, x_{n})\tp \in \R^{n}~:~ \|x\|=1\right\}.
\]
The  {\it intrinsic distance on the sphere} between  \(x, y \in \SP^{n-1}\)  is  defined by $\dist(x, y):=\arccos \langle x , y\rangle.$  It is also easy to verify that $\dist(x, y)\leq \pi$ for any $x, y\in \SP^{n-1}$,
and $\dist(x, y)=\pi$ if and only if $x=-y$. The intersection curve of a plane through the origin of \(\R^{n}\)
with the sphere \( \SP^{n-1}\) is called a { \it geodesic}.   A geodesic segment joining $x$ to $y$  is said to be  {\it   minimal } if its length is equal to $\dist(x, y)$.
A set \({\cal C} \subseteq \SP^{n-1}\) is said to be  \emph{spherically convex} if for any \(x\), \(y\in {\cal C} \)
all the minimal geodesic segments joining \(x\) to \(y\)  are contained in \( {\cal C} \).
For notational convenience, in the following text we assume that {\it all spherically convex sets are nonempty proper
subsets of the sphere}.  For each closed set \( {\cal A} \subseteq \SP^{n-1}\), let \({\cal K}_{\cal A}\subseteq\R^{n}\) be the {\it cone spanned by} \({\cal A}\),
namely,
\begin{equation*} \label{eq:pccone}
{\cal K}_{\cal A}\:=\left\{ tx \, :\, x\in {\cal A}, \; t\in [0, +\infty) \right\}.
\end{equation*}
\begin{figure}[h!]
	\centering
	\includegraphics[width=.6\textwidth, height=.5\textwidth]{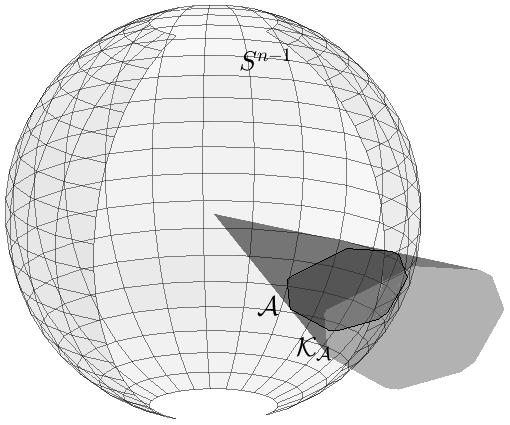} \\
	 \caption{ Closed set \({\cal A}\) and the cone \({\cal K}_{\cal A} \) spanned by \({\cal A}\).~~~~~~~~~} 
	\label{fig:soc}
\end{figure}
Obviously, \({\cal K}_{\cal A} \) is the smallest cone containing \({\cal A}\) (see Fig.
\ref{fig:soc}).  In the following proposition, a relationship between spherically convex sets and the cones spanned by them will be exhibited. The proof is  given  in  \cite[Proposition 2]{FerreiraIusemNemeth2013}.
\begin{proposition} \label{pr:ccs}
	The set \({\cal C}\) is spherically convex if and only if the cone \(\mathcal{K}_{\cal
	C}\) is pointed convex.
\end{proposition}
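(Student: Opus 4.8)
The plan is to exploit the bijection between nonzero points of the cone $\mathcal{K}_{\mathcal C}$ and points of $\mathcal{C}$ furnished by normalization, together with the explicit description of minimal geodesics. First I would record the elementary observation that every nonzero $v\in\mathcal{K}_{\mathcal C}$ has the form $v=tx$ with $t>0$ and $x\in\mathcal{C}$, whence $v/\|v\|=x\in\mathcal{C}$; conversely any point of $\mathcal{C}$ lies in $\mathcal{K}_{\mathcal C}$. Second, for non-antipodal $x,y\in\SP^{n-1}$ (that is, $x\neq-y$) the chord joining them inside $\spa\{x,y\}$ avoids the origin, so the unique minimal geodesic joining them is precisely $\{((1-\lambda)x+\lambda y)/\|(1-\lambda)x+\lambda y\|:\lambda\in[0,1]\}$; this identifies geodesic points with normalized convex combinations.

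For the implication that a pointed convex $\mathcal{K}_{\mathcal C}$ yields spherical convexity of $\mathcal{C}$, I would first use pointedness to rule out antipodal pairs in $\mathcal{C}$: if $x,-x\in\mathcal{C}$ then both $x$ and $-x$ lie in $\mathcal{K}_{\mathcal C}\cap(-\mathcal{K}_{\mathcal C})$, contradicting $\mathcal{K}_{\mathcal C}\cap(-\mathcal{K}_{\mathcal C})\subseteq\{0\}$. Hence every pair $x,y\in\mathcal{C}$ admits a unique minimal geodesic. Since $\mathcal{K}_{\mathcal C}$ is a convex cone containing $0$, each convex combination $(1-\lambda)x+\lambda y$ lies in $\mathcal{K}_{\mathcal C}$ and is nonzero, so by the normalization observation its normalization lies in $\mathcal{C}$; thus the whole minimal geodesic is contained in $\mathcal{C}$.

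For the converse, assume $\mathcal{C}$ is spherically convex. To prove pointedness I would argue by contraposition: a nonzero $v\in\mathcal{K}_{\mathcal C}\cap(-\mathcal{K}_{\mathcal C})$ forces, after normalizing and comparing norms, an antipodal pair $x,-x\in\mathcal{C}$. The crucial point is that the union of \emph{all} minimal geodesics joining $x$ to $-x$ sweeps out the entire sphere, so spherical convexity would give $\mathcal{C}=\SP^{n-1}$, contradicting the standing assumption that spherically convex sets are proper subsets of the sphere; therefore $\mathcal{K}_{\mathcal C}$ is pointed and, in particular, no two points of $\mathcal{C}$ are antipodal. For convexity, given $u=sx$, $v=tz$ with $s,t>0$ and $x,z\in\mathcal{C}$, I write $u+v=(s+t)\big((1-\lambda)x+\lambda z\big)$ with $\lambda=t/(s+t)\in[0,1]$; since $x,z$ are non-antipodal, spherical convexity places the normalization of $(1-\lambda)x+\lambda z$ in $\mathcal{C}$, so the vector itself lies in $\mathcal{K}_{\mathcal C}$, and hence so does its positive multiple $u+v$.

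The main obstacle I anticipate is the pointedness half of the converse, where the properness hypothesis is indispensable: one must recognize that an antipodal pair in a spherically convex set collapses the set to all of $\SP^{n-1}$, which is exactly what the standing convention on proper subsets excludes. The remaining steps are routine manipulations of normalized convex combinations once the geodesic parametrization is in hand.
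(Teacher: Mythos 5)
Your proof is correct, and it follows essentially the same route as the argument the paper relies on (it cites Proposition~2 of Ferreira--Iusem--N\'emeth, \emph{J. Global Optim.} 2013, rather than reproving it): identify minimal geodesics between non-antipodal points with normalized convex combinations, use pointedness together with the properness convention to exclude antipodal pairs, and translate convexity of the cone into containment of the normalized chord. You also correctly isolate the one genuinely delicate point, namely that an antipodal pair in a spherically convex set forces the set to be all of $\SP^{n-1}$, which is exactly where the standing assumption of properness is needed.
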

\subsection{Spherically quasi-convex  functions  on spherically convex sets} \label{sec:qcqfcs}
In this section we recall the concept of spherically quasi-convex  functions  on spherically convex sets and we present a basic characterization
of them; for more details see  \cite{FerreiraNemethXiao2018}.
\begin{definition}\label{def:qcf-b}
Let \( {\cal C}\subseteq\SP^{n-1}\) be a spherically convex set and $I\subseteq \R$ be  an interval.
A function \(f:{\cal C} \to \R\)  is said to be spherically quasi-convex (respectively, strictly spherically quasi-convex)
if for any minimal geodesic  segment \(\gamma:I\to {\cal C}\), the composition
\( f\circ \gamma :I\to \R\) is quasi-convex (respectively, strictly quasi-convex) in the usual sense, i.e.,
\(f(\gamma(t))\leq \max \{ f(\gamma(t_{1})), f(\gamma(t_{2}))\}\) for all \(t\in [t_{1}, t_{2}]\subseteq I\), (respectively,   \(f(\gamma(t))< \max
\{ f(\gamma(t_{1})), f(\gamma(t_{2}))\}\)  for all \(t\in [t_{1}, t_{2}]\subseteq I\), $t_1\ne t_2$).
\end{definition}

\begin{remark}
The above definition implies that,  if \(f: \SP^{n-1} \to \R\)  is  spherically quasi-convex, then $f$ is constant. However, as wee will show,
there exist  non-constant spherically quasi-convex functions  defined on  proper convex sets  of $\SP^{n-1}$.
\end{remark}
To simplify the notations, the {\it sub-level sets} of a function  \(f:\R^n\supseteq{\cal M} \to \R\)   are denoted by
\begin{equation*} \label{eq:sls}
[f\leq c]:=\{x\in {\cal M} :\; f(x)\leq c\}, \qquad c\in \R.
\end{equation*}
\begin{proposition} \label{pr:charb1}
Let \( {\cal C}\subseteq\SP^{n-1}\) be a spherically convex set. A function \(f:{\cal C} \to \R\) is spherically quasi-convex if and only if for all \(c\in \R\) the sub-level sets \([f\leq c]\) are spherically convex.
\end{proposition}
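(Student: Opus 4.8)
The plan is to prove the two implications of the equivalence separately, following the classical pattern that relates quasi-convexity of a real function to convexity of its sub-level sets, with the role of line segments now played by minimal geodesic segments and the role of convex sets by spherically convex sets.

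First I would show that spherical quasi-convexity of \(f\) implies spherical convexity of every sub-level set. Fix \(c\in\R\) and take arbitrary \(x,y\in[f\leq c]\). Since \({\cal C}\) is spherically convex, every minimal geodesic segment joining \(x\) to \(y\) lies in \({\cal C}\); after reparametrizing, write such a segment as \(\gamma:[t_1,t_2]\to{\cal C}\) with \(\gamma(t_1)=x\) and \(\gamma(t_2)=y\). Applying the defining inequality of spherical quasi-convexity to \(f\circ\gamma\) yields \(f(\gamma(t))\leq\max\{f(x),f(y)\}\leq c\) for every \(t\in[t_1,t_2]\), so the whole segment lies in \([f\leq c]\). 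As \(x\), \(y\) and the segment were arbitrary, \([f\leq c]\) is spherically convex.

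For the converse I would assume that every sub-level set is spherically convex and take an arbitrary minimal geodesic segment \(\gamma:I\to{\cal C}\), a subinterval \([t_1,t_2]\subseteq I\), and a point \(t\in[t_1,t_2]\). Setting \(c:=\max\{f(\gamma(t_1)),f(\gamma(t_2))\}\) places both endpoints \(\gamma(t_1),\gamma(t_2)\) in \([f\leq c]\). The key step is to observe that the restriction \(\gamma|_{[t_1,t_2]}\) is itself a minimal geodesic segment joining \(\gamma(t_1)\) to \(\gamma(t_2)\): a sub-arc of a minimal great-circle arc is again minimal, because its length does not exceed that of the whole arc, which equals \(\dist(\gamma(t_1),\gamma(t_2))\le\pi\). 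Spherical convexity of \([f\leq c]\) then forces \(\gamma(t)\in[f\leq c]\), i.e. \(f(\gamma(t))\leq\max\{f(\gamma(t_1)),f(\gamma(t_2))\}\), which is exactly the quasi-convexity of \(f\circ\gamma\); hence \(f\) is spherically quasi-convex.

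The main obstacle, and the only place where spherical geometry enters nontrivially, is justifying that a sub-segment of a minimal geodesic is again minimal, which underlies the converse. Here it is convenient to note that points of a spherically convex set are never antipodal: by Proposition \ref{pr:ccs} the spanning cone \({\cal K}_{\cal C}\) is pointed, so \({\cal C}\) cannot contain a pair \(\{z,-z\}\), whence \(\dist(\gamma(t_1),\gamma(t_2))<\pi\). Combined with the constant-speed great-circle parametrization of \(\gamma\), this makes the restriction unambiguous and minimal, closing the argument. All remaining manipulations are purely set-theoretic and mirror the Euclidean characterization of quasi-convex functions.
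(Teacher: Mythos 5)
Your proof is correct. Note that the paper itself gives no proof of Proposition \ref{pr:charb1} --- it is recalled from \cite{FerreiraNemethXiao2018} --- so there is nothing to compare line by line; your argument is the expected one, namely the standard sub-level-set characterization of quasi-convexity transported to the sphere, and you correctly isolate and justify the one genuinely spherical point (that a sub-arc of a minimal geodesic segment is again minimal, with non-antipodality of points of a spherically convex set following from Proposition \ref{pr:ccs}).
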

\subsubsection{Spherically quasi-convex quadratic functions  on spherically convex sets} \label{sec:sqcqfcs}
In this section we recall earlier results of  quadratic  quasi-convex functions on  general  spherically convex sets.   {\it Henceforward we assume that the cone ${\cal K}\subseteq \R^{n}$ is proper and subdual, the set ${\cal C}=\SP^{n-1}\cap\inte({\cal K})$ is open and spherically convex, and  $A=A^T\in\R^{n\times n}$}.   Let $q_A: {\cal C}\to\R$ be   the quadratic function defined by
\begin{equation} \label{eq:QuadFunc}
q_A(x):=\langle Ax,x\rangle.
\end{equation}
To proceed we need as well the  {\it restriction  on $\inte {\cal K} $ of  Rayleigh quotient }  $\varphi_A :  \inte {\cal K} \to \R$  defined by
\begin{equation} \label{eq:RayleighFunction}
\varphi_A(x):=\frac{\langle Ax, x \rangle}{\|x\|^2}.
\end{equation}
In the followings  we state  some   properties  of the functions   $q_A$  and $\varphi_A(x)$, for details see \cite{FerreiraNemethXiao2018}.
\begin{proposition}\label{pr:spher-quasiconv}
 The following statements  are equivalent:
	\begin{enumerate}
		\item[(a)]   $q_A$ is spherically quasi-convex;
		\item [(b)] $\langle Ax,y\rangle\leq\langle x,y\rangle\max \left\{q_A(x), ~q_A(y)\right\}$,  for all $ x,y\in\SP^{n-1}\cap {\cal K}$;
		\item [(c)] $ \displaystyle \frac{\langle Ax,y\rangle}{\langle x,y\rangle}\leq\max\left\{\varphi_A(x), ~\varphi_A(y)\right\}$,  for all $x,y\in {\cal K}$ with $\langle x,y\rangle\ne 0$.
	\end{enumerate}
\end{proposition}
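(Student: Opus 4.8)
The plan is to reduce all three statements to the behaviour of $q_A$ along a single minimal geodesic and to tie them together through one algebraic identity. Fix $x,y\in\SP^{n-1}\cap\mathcal K$ with $x\neq y$; since $\mathcal K$ is pointed we have $x\neq -y$, so the minimal geodesic is unique and can be written as $\gamma(t)=\alpha(t)x+\beta(t)y$ with $\alpha(t)=\sin((1-t)\theta)/\sin\theta$, $\beta(t)=\sin(t\theta)/\sin\theta$, and $\theta=\arccos\langle x,y\rangle\in(0,\pi)$, where $\alpha(t),\beta(t)\ge 0$ and $\|\gamma(t)\|=1$ on $[0,1]$. Writing $M:=\max\{q_A(x),q_A(y)\}$ and using $\|\gamma(t)\|^2=\alpha^2+2\alpha\beta\langle x,y\rangle+\beta^2=1$, I would first record the identity
\[ M-q_A(\gamma(t))=\alpha(t)^2(M-q_A(x))+\beta(t)^2(M-q_A(y))+2\alpha(t)\beta(t)(\langle x,y\rangle M-\langle Ax,y\rangle). \]
This identity is the hinge of the whole argument, and everything else is bookkeeping around it.

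For the implication (b)$\Rightarrow$(a), the two differences $M-q_A(x)$ and $M-q_A(y)$ are nonnegative by the definition of $M$, and (b) says exactly that the cross coefficient $\langle x,y\rangle M-\langle Ax,y\rangle$ is nonnegative; since $\alpha,\beta\ge 0$, all three summands are nonnegative, so $q_A(\gamma(t))\le M$ for every $t$. Because any sub-arc of a minimal geodesic is again the minimal geodesic between its endpoints (its length stays below $\pi$), this is precisely the statement that on each geodesic segment $q_A$ attains its maximum at an endpoint, i.e. that $q_A$ is spherically quasi-convex via Definition~\ref{def:qcf-b}.

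For (a)$\Rightarrow$(b) I would argue at first order. Assuming $q_A$ quasi-convex and, after possibly swapping $x$ and $y$, that $M=q_A(y)$, the $\beta^2$ term drops out. Set $h(t):=M-q_A(\gamma(t))$; quasi-convexity gives $h\ge 0$ on $[0,1]$ while $h(1)=0$, so the left derivative satisfies $h'(1)\le 0$. Differentiating the identity and evaluating at $t=1$ (where $\alpha(1)=0$, $\beta(1)=1$, $\alpha'(1)=-\theta/\sin\theta$) annihilates the surviving $\alpha^2$ term and leaves $h'(1)=-2(\theta/\sin\theta)(\langle x,y\rangle M-\langle Ax,y\rangle)$; since $\theta/\sin\theta>0$, the condition $h'(1)\le 0$ forces $\langle Ax,y\rangle\le\langle x,y\rangle M$, which is (b). The degenerate cases $x=y$ and $\langle x,y\rangle=0$ (i.e. $\theta=\pi/2$) are checked directly. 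This is the step I expect to be the main obstacle: converting the purely qualitative hypothesis ``the maximum on each geodesic is at an endpoint'' into the sharp pointwise inequality (b) genuinely needs the one-sided derivative at the endpoint realizing the maximum, and one must take care that the argument degenerates correctly when $q_A(x)=q_A(y)$ or when $x,y$ are orthogonal.

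Finally, (b)$\Leftrightarrow$(c) is pure positive homogeneity. Subduality of $\mathcal K$ gives $\langle x,y\rangle\ge 0$ for all $x,y\in\mathcal K$, which keeps every sign under control. Given nonzero $x,y\in\mathcal K$ with $\langle x,y\rangle>0$, I would apply (b) to the unit vectors $x/\|x\|$ and $y/\|y\|$ in $\SP^{n-1}\cap\mathcal K$, clear the positive factor $\|x\|\,\|y\|$, use $q_A(x/\|x\|)=\varphi_A(x)$, and divide by $\langle x,y\rangle>0$ to obtain (c); all steps reverse. The only wrinkle is the boundary case $\langle x,y\rangle=0$, which is present in (b) but excluded from (c); I would recover it by continuity (or read it off directly from the identity, where it reduces to $\langle Ax,y\rangle\le 0$).
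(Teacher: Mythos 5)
The paper does not actually prove this proposition; it is recalled from \cite{FerreiraNemethXiao2018}, and the argument there is essentially the one you give: parametrize the minimal geodesic as $\gamma(t)=\frac{\sin((1-t)\theta)}{\sin\theta}x+\frac{\sin(t\theta)}{\sin\theta}y$, expand $q_A(\gamma(t))$ using $A=A^\top$ and $\|\gamma(t)\|=1$, and read off the equivalence of (a) and (b) from the sign of the cross coefficient, with (b)$\Leftrightarrow$(c) by homogeneity and the subduality inequality $\langle x,y\rangle\ge 0$. Your identity is correct, the endpoint first-order argument for (a)$\Rightarrow$(b) is sound (and the case $\langle x,y\rangle=0$ is in fact handled by the same formula, since $\theta=\pi/2$ is not a singularity of the parametrization), and the rescaling for (b)$\Leftrightarrow$(c), including the perturbation argument for the orthogonal case, works.

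One point you should patch: statement (a) concerns quasi-convexity on ${\cal C}=\SP^{n-1}\cap\inte({\cal K})$, whereas (b) is asserted for all $x,y\in\SP^{n-1}\cap{\cal K}$, including boundary points where $q_A$ is not even defined as a function on ${\cal C}$ and where the connecting geodesic need not lie in ${\cal C}$. Your (a)$\Rightarrow$(b) derivation therefore only yields (b) for $x,y\in{\cal C}$; to get it on all of $\SP^{n-1}\cap{\cal K}$ you need to observe that $\inte({\cal K})$ is dense in the proper cone ${\cal K}$ and that the inequality in (b) is preserved under limits. This is a one-line fix, but as written the step is incomplete.
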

\begin{corollary}\label{cor:qpcf}
Let  ${\cal K}$ be   self-dual.  If  $q_A$ is spherically quasi-convex, then $A$ has the $\cal K$-Z-property.
\end{corollary}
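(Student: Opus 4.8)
The plan is to derive the conclusion directly from the characterization in Proposition~\ref{pr:spher-quasiconv}(b), using self-duality only to rewrite the set $C({\cal K})$. Since ${\cal K}$ is self-dual we have ${\cal K}^*={\cal K}$, so that
\[
C({\cal K})=\{(x,y)\in\R^n\times\R^n:\ x,y\in{\cal K},\ \langle x,y\rangle=0\}.
\]
Thus establishing the ${\cal K}$-Z-property reduces to showing $\langle Ax,y\rangle\le0$ for every pair $x,y\in{\cal K}$ with $\langle x,y\rangle=0$.

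First I would dispose of the degenerate case: if $x=0$ or $y=0$, then $\langle Ax,y\rangle=0$ and there is nothing to prove. Assume therefore $x,y\neq0$ and set $\hat x:=x/\|x\|$ and $\hat y:=y/\|y\|$. Because ${\cal K}$ is a cone, both $\hat x$ and $\hat y$ belong to $\SP^{n-1}\cap{\cal K}$, and clearly $\langle\hat x,\hat y\rangle=0$. I would note that these unit vectors necessarily lie on $\SP^{n-1}\cap\bdr{\cal K}$ rather than in the open set ${\cal C}$, since for a self-dual cone every interior point has strictly positive inner product with each nonzero element of ${\cal K}$; this is precisely why one invokes the form of the characterization stated over the whole closed intersection $\SP^{n-1}\cap{\cal K}$.

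The key step is then to apply Proposition~\ref{pr:spher-quasiconv}(b) to the pair $\hat x,\hat y\in\SP^{n-1}\cap{\cal K}$. Since $q_A$ is spherically quasi-convex, this yields
\[
\langle A\hat x,\hat y\rangle\le\langle\hat x,\hat y\rangle\,\max\{q_A(\hat x),q_A(\hat y)\}=0,
\]
the right-hand side vanishing exactly because $\langle\hat x,\hat y\rangle=0$. Multiplying by $\|x\|\,\|y\|>0$ gives $\langle Ax,y\rangle\le0$. As the pair $(x,y)\in C({\cal K})$ was arbitrary, $A$ has the ${\cal K}$-Z-property.

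I do not anticipate a genuine obstacle: the statement is essentially a normalization of Proposition~\ref{pr:spher-quasiconv}(b), with self-duality entering only to identify ${\cal K}^*$ with ${\cal K}$ in the definition of $C({\cal K})$. The single point demanding care is the location of the orthogonal pairs on $\bdr{\cal K}$, which is what forces the use of the closed-set form of the characterization instead of the open set ${\cal C}$.
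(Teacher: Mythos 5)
Your proof is correct: the paper states this corollary without proof (it is recalled from \cite{FerreiraNemethXiao2018}), and your argument — using self-duality to identify $C({\cal K})$ with orthogonal pairs in ${\cal K}$, normalizing, and applying Proposition~\ref{pr:spher-quasiconv}(b) so that the right-hand side vanishes — is exactly the intended deduction. Your remark that the orthogonal pairs lie on $\SP^{n-1}\cap\bdr{\cal K}$, which is why the closed-set form of item (b) is needed, is a correct and worthwhile point of care.
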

\begin{theorem}\label{th:quasiconv-iff}
 The function $q_A$   is spherically quasi-convex if and only if  $\varphi_A$   is quasi-convex.
\end{theorem}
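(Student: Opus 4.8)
The plan is to route both sides through sub-level sets and to exploit the fact that $\varphi_A$ is nothing but the $0$-homogeneous extension of $q_A$ off the sphere. By Proposition~\ref{pr:charb1}, $q_A$ is spherically quasi-convex exactly when every spherical sub-level set $[q_A\le c]$ is spherically convex. On the other hand, $\inte{\cal K}$ is a convex set, so $\varphi_A$ is quasi-convex in the usual sense exactly when every sub-level set $[\varphi_A\le c]\subseteq\inte{\cal K}$ is convex. Hence it suffices to prove, for each fixed $c\in\R$, the equivalence ``$[q_A\le c]$ spherically convex $\iff$ $[\varphi_A\le c]$ convex''.

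The link I would use is that $\varphi_A(tx)=\varphi_A(x)$ for all $t>0$, together with $\varphi_A(x)=q_A(x)$ whenever $\|x\|=1$. For $z\in\inte{\cal K}$ these give $\varphi_A(z)=q_A(z/\|z\|)$, so $z\in[\varphi_A\le c]$ if and only if $z/\|z\|\in[q_A\le c]$. Consequently the cone spanned by the spherical sub-level set is precisely the Euclidean one with the apex adjoined,
\[
{\cal K}_{[q_A\le c]}=[\varphi_A\le c]\cup\{0\}.
\]
This identity is the heart of the argument: it converts a question about the spherical convexity of $[q_A\le c]$ into a question about the cone it spans, where Proposition~\ref{pr:ccs} applies.

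With the identity in hand I would invoke Proposition~\ref{pr:ccs}: $[q_A\le c]$ is spherically convex if and only if ${\cal K}_{[q_A\le c]}$ is pointed convex. Since ${\cal K}$ is pointed and $[\varphi_A\le c]\subseteq\inte{\cal K}$, the set $[\varphi_A\le c]\cup\{0\}$ is automatically pointed, so only convexity is at issue, and it remains to check that $[\varphi_A\le c]\cup\{0\}$ is convex if and only if $[\varphi_A\le c]$ is convex. One direction follows from the cone property (a scalar $t>0$ keeps a point in the set, while $t=0$ lands on the adjoined apex); for the converse, given $x,y\in[\varphi_A\le c]$ the segment joining them cannot pass through $0$, because $x,y\in\inte{\cal K}$ with ${\cal K}$ pointed rules out $y=-\lambda x$. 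Chaining these equivalences over all $c$, with empty sub-level sets handled trivially, gives the theorem. The step demanding the most care is this last one: pointedness of ${\cal K}$ must be used both to obtain pointedness of ${\cal K}_{[q_A\le c]}$ for free and to ensure no segment between two sub-level points degenerates through the apex, which is exactly what lets convexity transfer cleanly between $[\varphi_A\le c]$ and $[\varphi_A\le c]\cup\{0\}$.
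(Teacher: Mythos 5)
Your proof is correct and follows essentially the route the paper relies on: the paper states Theorem~\ref{th:quasiconv-iff} without reproducing a proof (deferring to \cite{FerreiraNemethXiao2018}), but your chain --- Proposition~\ref{pr:charb1}, the identity ${\cal K}_{[q_A\le c]}=[\varphi_A\le c]\cup\{0\}$ coming from the $0$-homogeneity of $\varphi_A$, Proposition~\ref{pr:ccs}, and the observation that pointedness of ${\cal K}$ makes the spanned cone automatically pointed and keeps segments between points of $\inte({\cal K})$ away from the apex --- is exactly the intended sub-level-set argument, whose intermediate equivalence the paper records as Corollary~\ref{cor:cor}. I see no gaps; the only points needing the care you already give them are the empty sub-level sets and the passage between $[\varphi_A\le c]$ and $[\varphi_A\le c]\cup\{0\}$.
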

The next result uses the following notations:  Let $c\in \R$ and define the cone
\begin{equation*}\label{eq:closure}
[\varphi_A\leq c]:=\{x\in{\cal K}:\langle A_{c}x,x\rangle\leq 0\}, \qquad   A_{c}:=A-c{\rm I_n}.
\end{equation*}
\begin{corollary}\label{cor:cor}
 The function $q_A$   is spherically quasi-convex  if and only if $[\varphi_A\leq c]$ is convex,  for any $c\in\R$.
\end{corollary}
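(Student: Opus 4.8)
The plan is to route the statement through Theorem~\ref{th:quasiconv-iff}, which reduces spherical quasi-convexity of $q_A$ to ordinary quasi-convexity of the Rayleigh quotient $\varphi_A$ on the convex cone $\inte{\cal K}$, and then to identify $[\varphi_A\le c]$ as, essentially, the sub-level cone of $\varphi_A$. The elementary observation powering everything is the identity $\langle A_c x,x\rangle=\langle Ax,x\rangle-c\|x\|^2=\|x\|^2\big(\varphi_A(x)-c\big)$, valid for every $x\ne 0$. Thus for nonzero $x\in{\cal K}$ the condition $\langle A_c x,x\rangle\le 0$ is exactly $\varphi_A(x)\le c$, and since the defining inequality is positively $2$-homogeneous the set $[\varphi_A\le c]$ is genuinely a cone containing the apex $0$.

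For the implication ``$[\varphi_A\le c]$ convex $\Rightarrow$ $q_A$ spherically quasi-convex'' I would argue directly. Given $x,y\in\inte{\cal K}$, put $c:=\max\{\varphi_A(x),\varphi_A(y)\}$; then both $x$ and $y$ lie in $[\varphi_A\le c]$, so by hypothesis every $w_\lambda:=\lambda x+(1-\lambda)y$ does too. Because $\inte{\cal K}$ is a convex cone, $w_\lambda\in\inte{\cal K}$ and in particular $w_\lambda\ne 0$, whence $\langle A_c w_\lambda,w_\lambda\rangle\le 0$ rewrites as $\varphi_A(w_\lambda)\le\max\{\varphi_A(x),\varphi_A(y)\}$. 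This is precisely quasi-convexity of $\varphi_A$, and Theorem~\ref{th:quasiconv-iff} then yields spherical quasi-convexity of $q_A$.

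For the converse I would start from quasi-convexity of $\varphi_A$ on $\inte{\cal K}$, fix $c$, and show $[\varphi_A\le c]$ is convex. Take $x,y\in[\varphi_A\le c]$; if either vanishes the segment joining them stays in $[\varphi_A\le c]$ by the cone property, so assume $x,y\ne 0$. The genuine difficulty is that $x$ or $y$ may lie on $\bdr{\cal K}$, where quasi-convexity of $\varphi_A$ (a statement about the interior) cannot be invoked directly; this is the main obstacle, and it is exactly the reconciliation between the ``open'' sub-level cone attached to ${\cal C}=\SP^{n-1}\cap\inte{\cal K}$ and the closed cone $[\varphi_A\le c]$. I would overcome it by approximation: choose $x_k,y_k\in\inte{\cal K}$ with $x_k\to x$, $y_k\to y$, which is possible because the standing assumptions make ${\cal K}$ proper, so ${\cal K}=\overline{\inte{\cal K}}$. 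For each $k$, quasi-convexity gives $\varphi_A(\lambda x_k+(1-\lambda)y_k)\le c_k:=\max\{\varphi_A(x_k),\varphi_A(y_k)\}$ for all $\lambda\in[0,1]$.

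Two facts make the passage to the limit legitimate. First, $\varphi_A$ is continuous on $\R^n\setminus\{0\}$, so $c_k\to\max\{\varphi_A(x),\varphi_A(y)\}\le c$. Second, pointedness of ${\cal K}$ forces $w_\lambda=\lambda x+(1-\lambda)y\ne 0$ for every $\lambda\in[0,1]$: a vanishing convex combination would put the nonzero vector $\lambda x=-(1-\lambda)y$ into ${\cal K}\cap(-{\cal K})=\{0\}$, a contradiction. Since $\lambda x_k+(1-\lambda)y_k\to w_\lambda\ne 0$, continuity yields $\varphi_A(w_\lambda)=\lim_k\varphi_A(\lambda x_k+(1-\lambda)y_k)\le\lim_k c_k\le c$, so $w_\lambda\in[\varphi_A\le c]$. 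This establishes convexity of $[\varphi_A\le c]$ and completes the equivalence. I expect the only delicate point to be this limiting argument on the boundary of ${\cal K}$; everything else is the two short homogeneity-and-maximum computations above.
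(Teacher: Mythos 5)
Your proof is correct and follows the route the paper intends: Corollary~\ref{cor:cor} is stated (without an explicit proof, as a recalled result) precisely as the combination of Theorem~\ref{th:quasiconv-iff} with the standard equivalence between quasi-convexity of $\varphi_A$ and convexity of its sub-level sets, which is exactly what you carry out via the homogeneity identity $\langle A_cx,x\rangle=\|x\|^2(\varphi_A(x)-c)$. Your additional limiting argument correctly reconciles the definition of $[\varphi_A\leq c]$ over the closed cone ${\cal K}$ with the fact that $\varphi_A$ lives on $\inte({\cal K})$ — a point the paper glosses over (its later proofs quietly use the $\inte({\cal K})$ version of the sub-level set) — so this extra care is a welcome refinement rather than a deviation.
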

\section{Spherically Quasi-Convex Quadratic Functions  on Spherically Subdual Convex Sets} \label{sec:qcqfcsd}
In this section we present    partial conditions characterizing  the  spherical   quasi-convexity of quadratic
functions on  spherically subdual convex
sets associated to subdual  cones.  The results obtained  generalize the corresponding ones   obtained in \cite[Section
4.1]{FerreiraNemethXiao2018}. In due course we will present a more precise correspondence.   Throughout
this section we assume that ${\cal K}$ is  subdual, i.e., ${\cal K}\subseteq{\cal K}^*$ and proper.  A  closed set \( {\cal
A} \subseteq \SP^{n-1}\) is called  {\it spherically subdual convex set} if the associated cone ${\cal K}_{\cal A}$ is subdual. It is clear that
if $A=A\tp\in\R^{n\times n}$ has only one eigenvalue, then  $q_A$ is constant and, consequently,    it is   spherically quasi-convex. Henceforth,
throughout  this section {\it we assume that $A$  has at least two distinct  eigenvalues}.   Let us recall that   $q_A$ and $\varphi_A$  are defined in
\eqref{eq:QuadFunc} and \eqref{eq:RayleighFunction}, respectively. Two technical lemmas, which are
useful in the following text, will be presented. They are generalizations of
Lemmas~14~and~15~of~\cite{FerreiraNemethXiao2018}, respectively.   More specifically,  in Lemma~14
of \cite{FerreiraNemethXiao2018}  a condition implying  the convexity of  $[\varphi_A\leq c]$,
for all  $c\notin(\lambda_2,\lambda_n)$,   is presented if $K={\mathbb R}^n_{+}$,  while here in
Lemma~\ref{Lem:Basic} that condition is extended to an arbitrary subdual cone.   Similarly,
Lemma~\ref{lem:Copositive} generalizes Lemma~15~of~\cite{FerreiraNemethXiao2018} from $K={\mathbb
R}^n_{+}$ to an arbitrary subdual cone.  For stating the next lemma, for $\{v^1, v^2,\dots,v^n\}$ a orthonormal system of eigenvectors of $A$  corresponding to the
eigenvalues $\lambda_1 < \lambda_2 \leq  \ldots \leq \lambda_n$, respectively, and  $c\in(\lambda_1,\lambda_2]$, define  the  convex   cone 
\begin{equation}\label{eq-ltheta}
	{\cal L}_c:=\left\{x\in\R^n:~\lng v^1,x\rng\geq  \sqrt{\theta_2(c)\lng v^2,x\rng^2+\dots+\theta_{n}(c)\lng
	v^n,x \rng^2}\right\},  \qquad \theta_i(c):=\frac{\lambda_i-c}{c-\lambda_1}, 
\end{equation}
for $i=2, \ldots, n$.  Note that if $\lambda_1<c<\lambda_2$, then $\theta_i(c)> 0$, for $i=2, \ldots, n$, and $\mathcal L_{c}$, $-\mathcal L_{c}$ are also a pointed proper  elliptic cones.   We also need to consider  the following cone
\begin{equation} \label{eq:cw}
{\mathcal W}:=(\mathcal L_{\lambda_2}\cup-\mathcal L_{\lambda_2})\cap\inte({\mathcal K}).
\end{equation} 
Considering that ${\mathcal K}$ is a proper cone,  then the cone ${\mathcal W}$ is  also  proper, i.e., $\inte({\mathcal W})\neq \varnothing$.  
\begin{lemma}\label{Lem:Basic}
Let $n\geq 2$,     $A=A\tp\in\R^{n\times n}$ and   $\{v^1, v^2,\dots,v^n\}$ be an orthonormal system of eigenvectors of $A$  corresponding to the
eigenvalues $\lambda_1 < \lambda_2 \leq  \ldots \leq \lambda_n$, respectively.  Then, the sublevel set $[\varphi_A\leq c]$ is convex for all $c\notin(\lambda_2,\lambda_n)$ if and only if
$v^1 \in {\cal W}^*\cup-{\cal W}^*$. In particular if $v^1 \in {\cal K}^*$, then $[\varphi_A\leq c]$ is convex for all 
$c\notin(\lambda_2,\lambda_n)$.
\end{lemma}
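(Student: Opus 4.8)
The plan is to analyze the sublevel set $[\varphi_A\leq c]$ explicitly in terms of the eigenbasis $\{v^1,\dots,v^n\}$. Writing $x=\sum_i \lng v^i,x\rng v^i$, we have $\lng A_c x,x\rng = \sum_i (\lambda_i-c)\lng v^i,x\rng^2$, so the cone $[\varphi_A\leq c]=\{x\in{\cal K}:\lng A_c x,x\rng\leq 0\}$ is the intersection of ${\cal K}$ with the quadratic cone $Q_c:=\{x:\sum_i(\lambda_i-c)\lng v^i,x\rng^2\leq 0\}$. The key observation is that the geometry of $Q_c$ changes according to where $c$ sits relative to the eigenvalues. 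First I would treat the two boundary regimes separately: for $c\leq\lambda_1$ all coefficients $\lambda_i-c\geq 0$, forcing $\lng A_cx,x\rng\geq 0$, so $[\varphi_A\leq c]$ is either trivial or degenerate and convexity is immediate; similarly for $c\geq\lambda_n$ all coefficients are $\leq 0$ and the sublevel set is all of ${\cal K}$, hence convex. The substantive range is $c\in(\lambda_1,\lambda_2]$.

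For $c\in(\lambda_1,\lambda_2]$ the coefficient $\lambda_1-c<0$ is the only negative one (strictly, since $\lambda_1<\lambda_2$), and dividing the defining inequality by $c-\lambda_1>0$ rewrites $Q_c$ precisely as ${\cal L}_c\cup-{\cal L}_c$ with the $\theta_i(c)$ from \eqref{eq-ltheta}. Thus $[\varphi_A\leq c]=({\cal L}_c\cup-{\cal L}_c)\cap{\cal K}$, and by Proposition~\ref{pr:ccs} convexity of this cone is what must be characterized. The natural reduction is that, since ${\cal K}$ is pointed, the intersection of the double cone ${\cal L}_c\cup-{\cal L}_c$ with ${\cal K}$ selects exactly one of the two nappes on the interior of ${\cal K}$, and convexity fails precisely when ${\cal K}$ meets both nappes nondegenerately. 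I would show that $({\cal L}_c\cup-{\cal L}_c)\cap\inte({\cal K})$ lies in a single nappe for all such $c$ if and only if it does so at the limiting value $c=\lambda_2$, which is exactly the cone ${\cal W}$ of \eqref{eq:cw}; this monotonicity in $c$ is the technical heart, using that the elliptic cones ${\cal L}_c$ shrink as $c\downarrow\lambda_1$ and expand as $c\uparrow\lambda_2$.

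The condition $v^1\in{\cal W}^*\cup-{\cal W}^*$ enters as the algebraic criterion for ${\cal W}$ to lie on one side. The axis of the elliptic cone ${\cal L}_c$ is spanned by $v^1$, and $\lng v^1,x\rng$ has constant sign on each nappe. I would argue that ${\cal W}$ sits inside a single nappe of ${\cal L}_{\lambda_2}\cup-{\cal L}_{\lambda_2}$ — equivalently $\lng v^1,x\rng$ does not change sign on ${\cal W}$ — exactly when $v^1$ (or $-v^1$) belongs to ${\cal W}^*$, since membership in the dual cone is the statement that $\lng v^1,\cdot\rng\geq 0$ throughout ${\cal W}$. Combining this with the reduction above gives the stated equivalence, and the final ``in particular'' clause follows because $v^1\in{\cal K}^*$ together with ${\cal W}\subseteq{\cal K}$ forces $\lng v^1,x\rng\geq 0$ on ${\cal W}$, hence $v^1\in{\cal W}^*$.

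I expect the main obstacle to be the monotonicity argument that reduces the ``for all $c\notin(\lambda_2,\lambda_n)$'' statement to the single cone ${\cal W}$ at $c=\lambda_2$: one must verify carefully that the one-nappe property is preserved as $c$ ranges over $(\lambda_1,\lambda_2]$ and degenerates correctly at the endpoints, handling the possibility that $\lng v^1,x\rng$ vanishes on parts of ${\cal K}$ and that ${\cal L}_{\lambda_2}$ may be a degenerate (possibly lower-dimensional or half-space-like) limit when $\lambda_2$ has multiplicity. The subdual hypothesis ${\cal K}\subseteq{\cal K}^*$ should be what rules out pathological configurations and guarantees ${\cal W}$ is proper as already noted before the lemma.
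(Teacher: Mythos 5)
Your overall architecture coincides with the paper's: the same eigen-expansion of $\lng A_cx,x\rng$, the same trivial cases $c\le\lambda_1$ and $c\ge\lambda_n$, the identification $[\varphi_A\le c]=({\cal L}_c\cup-{\cal L}_c)\cap\inte({\cal K})$ for $c\in(\lambda_1,\lambda_2]$, the monotonicity of ${\cal L}_c$ in $c$ with ${\cal W}$ as the envelope at $c=\lambda_2$, and the translation of the one-nappe property into $v^1\in{\cal W}^*\cup-{\cal W}^*$. The sufficiency direction and the ``in particular'' clause are fine as you sketch them.

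The gap is in the necessity direction, at the step you call the ``natural reduction'': you assert that convexity fails precisely when the set meets both open nappes, and the only justification offered --- ``since ${\cal K}$ is pointed, the intersection selects exactly one of the two nappes'' --- is false (a pointed cone can perfectly well meet both nappes; that is exactly the configuration the necessity direction must exploit). The implication ``meets both open nappes $\Rightarrow$ not convex'' is the crux, and it is where the paper spends most of its effort: it chooses $u_y,u_z$ in the two open nappes with $v^1,u_y,u_z$ linearly independent, locates the boundary crossings $w_y,w_z$ of the segment joining them, and shows via a strict Cauchy--Schwarz inequality that $(w_y+w_z)/2\notin{\cal L}_{\bar c}\cup-{\cal L}_{\bar c}$. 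You supply no substitute for this, and you mislabel the monotonicity in $c$ as the technical heart. That said, your unproved claim is true and admits a shorter argument than the paper's: for $\bar c\in(\lambda_1,\lambda_2)$ every $\theta_i(\bar c)>0$, so on the hyperplane $\lng v^1,\cdot\rng=0$ the form $\sum_{i\ge2}\theta_i(\bar c)\lng v^i,x\rng^2-\lng v^1,x\rng^2$ is strictly positive except at the origin; a segment joining a point of ${\cal L}_{\bar c}\cap\inte({\cal K})$ with $\lng v^1,y\rng>0$ to a point of $-{\cal L}_{\bar c}\cap\inte({\cal K})$ with $\lng v^1,z\rng<0$ lies in $\inte({\cal K})$ (hence avoids $0$), must cross that hyperplane, and the crossing point therefore leaves the double cone. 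If you add this, together with the continuity step (the paper's $\psi(x,c)$ argument) showing that points of $\inte({\cal W})$ on which $\lng v^1,\cdot\rng$ takes both signs persist into some ${\cal L}_{\bar c}\cup-{\cal L}_{\bar c}$ with $\bar c$ strictly below $\lambda_2$ --- one cannot argue at $c=\lambda_2$ itself, since some $\theta_i(\lambda_2)$ may vanish and the nappes then fail to be separated by the hyperplane --- your route closes and is in fact more elementary than the paper's.
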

\begin{proof} By using the spectral decomposition of $A$,
			we have  $A=\sum_{i=1}^n\lambda_iv^i(v^i)\tp$. From  \eqref{eq:RayleighFunction} we have 
			\begin{equation}\label{eq:bl1} 
					[\varphi_A\leq c]=\Big\{x\in\inte ({\cal K}):~ \sum_{i=1}^n(\lambda_i-c)\langle v^i, x\rangle^2  \leq
					0\Big\}.
			\end{equation}
			If $\lambda_1<c\le\lambda_2$, then by using \eqref{eq-ltheta} the equality  \eqref{eq:bl1} can be completed as follows
			\begin{align}\label{eq:bl1-2} 
					{\mathcal W}=[\varphi_A\leq\lambda_2]\supseteq[\varphi_A\leq c]&=({\mathcal L}_c\cup-{\mathcal
					L}_c)\cap\inte({\mathcal K}) \notag \\
					                                                                                                        &=\left\{x\in\inte(\mathcal K):~\lng v^1,x\rng^2\geq\theta_2(c)\lng v^2,x\rng^2+\dots+\theta_{n}(c)\lng v^n,x \rng^2\right\}.
			\end{align}
\noindent			
 \textbf{\it Sufficiency of the first statement:} 
			Let $v^1\in {\cal W}^*$ (a similar argument holds for $v^1\in-{\cal W}^*$).  If $c<\lambda_1$, then considering that  $v^1,
			v^2, \ldots , v^n$ are linearly independent and $0\notin \inte ({\cal K})$, we obtain from \eqref{eq:bl1} that
			$[\varphi_A\leq c]=\varnothing $ and hence it is convex.  If $c=\lambda_1$, then  \eqref{eq:bl1} implies that
			$[\varphi_A\leq c]={\cal S}\cap \inte ({\cal K}),$ where  ${\cal S}: =\{x\in\R^n ~: \langle v^i, x\rangle=0, ~\mbox{for}~ i=2, \ldots, n\}$.
			Thus, due to $\inte ({\cal K})$ and ${\cal S}$ being convex, we conclude that $[\varphi_A\leq c]$ is also convex. Now, we
			suppose  that $\lambda_1<c\le\lambda_2$. Since $v^1\in {\cal W}^*$, for any $x\in\mathcal W$ we obtain that $\lng
			v^1,x\rng\ge0$ and from \eqref{eq:bl1-2} we have  $[\varphi_A\leq c]= {\cal L}_c\cap \inte ({\cal K})$. Due to the 
			convexity
			of the cones   ${\cal L}_c$ and $\inte ({\cal K})$, we obtain  that  $[\varphi_A\leq c]$ is convex.   Finally, if $c\ge
			\lambda_n$, then \eqref{eq:bl1} implies that $[\varphi_A\leq c]=\inte ({\cal K})$ is convex.\\
\noindent	
\textbf{\it Necessity of the first statement:}  We will show that $v^1\notin {\cal W}^*\cup-{\cal W}^*$ implies that $[\varphi_A\leq c]$ is not convex, for some $c\in (\lambda_1,\lambda_2)$.
			Suppose that $v^1\notin {\cal W}^*\cup-{\cal W}^*$.  Thus, considering that $\inte(\mathcal W)\neq \varnothing$, there exist $y,z\in\inte(\mathcal W)$ such that 
			$\lng v^1,y\rng>0$ and $\lng v^1,z\rng<0$. Thus, \eqref{eq-ltheta} and \eqref{eq:cw}  imply  that  
			\begin{equation} \label{eq:yz}
			y\in\inte(\mathcal K)\cap\inte(\mathcal L_{\lambda_2}), \qquad \quad   z\in\inte(\mathcal K)\cap\inte(-\mathcal L_{\lambda_2}).
			\end{equation}
			We claim that there exists ${\bar c}\in (\lambda_1,\lambda_2)$ such that $y\in\inte(\mathcal K)\cap \inte({\mathcal L}_{{\bar c}})$ and  $z\in\inte(\mathcal K)\cap \inte(-\mathcal L_{{\bar c}})$.  In order to simplify the notations, for  $ x\in\R^n$ and   $c\in (\lambda_1,\lambda_2]$, we define the following function   
			\begin{equation} \label{eq:fpsy}
			\psi(x,c):=\sqrt{\theta_2(c)\lng v^2,x\rng^2+\dots+\theta_{n}(c)\lng v^n,x \rng^2}.
			\end{equation}
			Note that $\psi$ is  a  continuous function  and,  from   the definition of $\theta_i$ in   \eqref{eq-ltheta},  it is also
			decreasing with respect to the second variable $c$. By using definitions \eqref{eq-ltheta} and \eqref{eq:fpsy} we have 
			\begin{equation} \label{eq:edef}
			\inte(\mathcal K)\cap \inte({\cal L}_c)=\left\{x\in\inte{\mathcal K}:~\lng v^1,x\rng>\psi(x,c)\right\}, \qquad \forall ~c\in (\lambda_1,\lambda_2].
			\end{equation}		
			 Thus,  taking into account  the  first inclusion in \eqref{eq:yz}  we conclude,  by setting   $c=\lambda_2$ in   \eqref{eq:edef},    that 
			\[
			\lim_{c\to\lambda_2}\psi(y,c)=\psi(y,\lambda_2)<\lng v^1,y\rng. 
			\] 
			Hence,  there exists a ${\hat c}\in(\lambda_1,\lambda_2)$ sufficiently close to $\lambda_2$ such that $\psi(y,{\hat c})< \lng v^1,y\rng$. Similarly, we can also prove that   there exists  a ${\tilde c}\in(\lambda_1,\lambda_2)$ sufficiently close to $\lambda_2$ such that $\psi(z,{\tilde c})< -\lng v^1,z\rng$. Thus, letting ${\bar c}=\max\{{\hat c}, {\tilde c} \}$ we conclude that $\psi(y,{\bar c})< \lng v^1,y\rng$ and  $\psi(z,{\bar c})< -\lng v^1,z\rng$, which by   \eqref{eq:fpsy} and \eqref{eq:edef} yields
			\begin{equation} \label{eq;icba}
			y\in \inte({\mathcal L}_{{\bar c}}), \qquad \quad z\in\inte(-\mathcal L_{{\bar c}}). 
			\end{equation} 
We know by 	 \eqref{eq:yz} that 	$y\in\inte(\mathcal K)$ and  $z\in\inte(\mathcal K)$, which together  with \eqref{eq;icba} yields
$y\in\inte(\mathcal K)\cap \inte({\mathcal L}_{{\bar c}})$ and  $z\in\inte(\mathcal K)\cap \inte(-\mathcal L_{{\bar c}})$ and the  claim is
concluded.  Therefore,  there exist  $r_y>0$ and  $r_z>0$ such $B(y,r_y)\subset \inte(\mathcal K)\cap \inte({\mathcal L}_{{\bar c}})$ and
$B(z,r_z)\subset \inte(\mathcal K)\cap \inte(-\mathcal L_{{\bar c}})$, where   $B(y,r_y)$ and  $B(z,r_z)$ denote  the open balls  with  centres
$y$, $z$ and  radii $r_y>0$, $r_z>0$. Hence, by  dimensionality reasons, we can take  $u_y\in \inte(\mathcal K)\cap \inte({\mathcal L}_{{\bar
c}})$ and  $u_z \in \inte(\mathcal K)\cap \inte(-\mathcal L_{{\bar c}})$ such that $v^1$,  $u_y$ and $u_z$ are linearly independent (L.I.).  Thus,
in particular, we have  $0\notin [u_y,u_z]$,  where  $[u_y,u_z]$ denotes the straight line segment joining $u_y$ to $u_z$. Since $ \inte({\mathcal
L}_{{\bar c}})\cap \inte(-\mathcal L_{{\bar c}})=~\varnothing$ and $0\notin [u_y,u_z]$, the  segment $[u_y,u_z]$ is intersecting, at the distinct
points
$w_y\neq 0$ and $w_z\neq 0$,   the boundaries of the sets  $ \inte({\mathcal L}_{{\bar c}})$ and $  \inte(-\mathcal L_{{\bar c}})$, respectively.
Moreover, due to   $u_y$ and $u_z$  being  L.I.,  $0\notin [u_y,u_z]$,  and    $w_y, w_z\in  [u_y,u_z]$, we conclude that the vectors $v^1$,  $w_y$
and $w_z$ are  also L. I. Our next task is to  prove  that  $(w_y+w_z)/2$ does not belongs to $ {\mathcal L_{\bar c}}\cup-{\mathcal L_{\bar c}}$, i.e., 
\begin{equation} \label{eq:mwywz}
\frac{1}{2}(w_y+w_z)\notin  {\mathcal L_{\bar c}}\cup-{\mathcal L_{\bar c}}.
\end{equation}
First, due to $w_y$ and $ w_z$ belonging  to  the boundaries  of  $ {\mathcal L}_{{\bar c}}$  and $  -\mathcal L_{{\bar c}}$, respectively,  we obtain from \eqref{eq-ltheta}  that
\begin{equation} \label{eq:ewywz}
\lng v^1,w_y\rng=  \sqrt{\sum_{i=2}^n\theta_i({\bar c})\lng v^i,w_y\rng^2}, \qquad \lng v^1,w_z\rng= - \sqrt{\sum_{i=2}^n\theta_i({\bar c})\lng v^i,w_z\rng^2}.
\end{equation}
On the other hand, by using the two equalities in  \eqref{eq:ewywz},  we obtain after some algebraic manipulations that 
$$
\sum_{i=2}^n\theta_i({\bar c})\left\lng v^i,\frac{1}{2}(w_y+w_z) \right\rng^2= \left\lng v^1,\frac{1}{2}w_y\right\rng^2+ \left\lng v^1,\frac{1}{2}w_z \right\rng^2+ 2\sum_{i=2}^n\theta_i({\bar c})\left\lng v^i,\frac{1}{2}w_y\right\rng\left\lng v^i,\frac{1}{2}w_z\right\rng.
$$
Thus, considering that   $\left\lng v^1,\frac{1}{2}(w_y+w_z) \right\rng^2= \left\lng v^1,\frac{1}{2}w_y\right\rng^2+ \left\lng v^1,\frac{1}{2}w_z \right\rng^2+ 2 \left\lng v^1,\frac{1}{2}w_y\right\rng\left\lng v^1,\frac{1}{2}w_z \right\rng,$ we have 
\begin{multline} \label{eq:fil3}
\sum_{i=2}^n\theta_i({\bar c})\left\lng v^i,\frac{1}{2}(w_y+w_z) \right\rng^2=\left\lng v^1,\frac{1}{2}(w_y+w_z) \right\rng^2- 2 \left\lng v^1,\frac{1}{2}w_y\right\rng\left\lng v^1,\frac{1}{2}w_z \right\rng \\+ 2\sum_{i=2}^n\theta_i({\bar c})\left\lng v^i,\frac{1}{2}w_y\right\rng\left\lng v^i,\frac{1}{2}w_z\right\rng.
\end{multline}
Applying   Cauchy-Schwarz   inequality and then,  using again both equalities  in  \eqref{eq:ewywz}, we conclude that  
\begin{align} \label{eq:esil3}
-\sum_{i=2}^n\theta_i({\bar c})\left\lng v^i,\frac{1}{2}w_y\right\rng\left\lng v^i,\frac{1}{2}w_z\right\rng &\leq    \sqrt{\sum_{i=2}^n\theta_i({\bar c})\lng v^i,w_y\rng^2}   \sqrt{\sum_{i=2}^n\theta_i({\bar c})\lng v^i,w_z\rng^2} \\                                      
                                                                                                                                                             &=-\left\lng v^1,\frac{1}{2}w_y\right\rng\left\lng v^1,\frac{1}{2}w_z \right\rng \notag. 
\end{align}
We are going to prove that Cauchy  inequality \eqref{eq:esil3} is  strict. For that, assume the contrary, i.e., that the last Cauchy inequality holds as equality. In this case,     there exists  $\alpha \neq 0$ such that 
$$
\left(\sqrt{\theta_2({\bar c})}\left\lng v^2,\frac{1}{2}w_y\right\rng, \ldots,  \sqrt{\theta_n({\bar c})}\left\lng v^n,\frac{1}{2}w_y\right\rng  \right)=\alpha \left(\sqrt{\theta_2({\bar c})}\left\lng v^2,-\frac{1}{2}w_z\right\rng, \ldots,  \sqrt{\theta_n({\bar c})}\left\lng v^n,-\frac{1}{2}w_z\right\rng  \right), 
$$
which implies  that $w_y+\alpha w_z$ is orthogonal to the set of vectors $\{v^2, \ldots, v^n \}$. Thus, since  $\{v^1, v^2,\dots,v^n\}$ is an orthonormal system,  $w_y+\alpha w_z$ is parallel to the vector $v^1$,  which is a contradiction due to vectors $v^1$,  $w_y$ and  $w_z$ being   L.I. Hence, \eqref{eq:esil3} holds strictly and combining it with \eqref{eq:fil3} we conclude that
$$
\sum_{i=2}^n\theta_i({\bar c})\left\lng v^i,\frac{1}{2}(w_y+w_z) \right\rng^2> \left\lng v^1,\frac{1}{2}(w_y+w_z) \right\rng^2, 
$$
and  \eqref{eq:mwywz} holds.  Therefore,  considering that $\frac{1}{2}(w_y+w_z) \in  ]u_y,u_z[$, we conclude that   $]u_y,u_z[ \not\subset {\mathcal L_{\bar c}}\cup-{\mathcal L_{\bar c}}$. Thus, by  using  the notation \eqref{eq:bl1-2},  we also have   $]u_y,u_z[ \not\subset ({\mathcal L}_{\bar c}\cup-{\mathcal L}_{\bar c})\cap\inte({\mathcal K})=[\varphi_A\le {\bar c}]$, and due to $u_y, u_z\in ({\mathcal L}_{\bar c}\cup-{\mathcal L}_{\bar c})\cap\inte({\mathcal K})=[\varphi_A\le {\bar c}]$, it follows that $[\varphi_A\le {\bar c}]$ is not 
			convex.\\
\noindent		
The {\it proof of the second statement}  follows  from $\mathcal K^*\subseteq\mathcal W^*$.  
\qed \end{proof}

\begin{remark}
	The dual of $\mathcal W$ in \eqref{eq:cw} can be expressed as 
	\begin{eqnarray}\label{eq-o-w}
		\begin{array}{rcl}
			\mathcal W^*=[(\mathcal K\cap\mathcal L_{\lambda_2})\cup (\mathcal K\cap-\mathcal
			L_{\lambda_2})]^*
			&=&(\mathcal K\cap\mathcal L_{\lambda_2})^*\cap (\mathcal K\cap-\mathcal L_{\lambda_2})^*\\
			&=&(\mathcal K^*+\mathcal L_{\lambda_2}^*)\cap(\mathcal K^*-\mathcal L_{\lambda_2}^*).
		\end{array}
	\end{eqnarray} 
\end{remark}

\begin{corollary}\label{cor-conv-sl}
	Suppose that $n\ge 3$ and $\lambda_2\le
	(\lambda_1+\lambda_3)/2$. If $\mathcal K\cap -\mathcal L_{\lambda_2}=\{0\}$ or $\mathcal
	K\cap \mathcal L_{\lambda_2}=\{0\}$, then $[\varphi_A\le c]$ is convex for all $c\notin (\lambda_2,\lambda_n)$.
\end{corollary}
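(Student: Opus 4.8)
The plan is to derive everything from Lemma~\ref{Lem:Basic}, which reduces the convexity of $[\varphi_A\le c]$ for all $c\notin(\lambda_2,\lambda_n)$ to the single membership $v^1\in\mathcal W^*\cup-\mathcal W^*$, where $\mathcal W=(\mathcal L_{\lambda_2}\cup-\mathcal L_{\lambda_2})\cap\inte(\mathcal K)$ as in \eqref{eq:cw}. So it suffices to verify this membership under either of the two hypotheses $\mathcal K\cap-\mathcal L_{\lambda_2}=\{0\}$ or $\mathcal K\cap\mathcal L_{\lambda_2}=\{0\}$, and by the evident symmetry (replacing $\mathcal L_{\lambda_2}$ by $-\mathcal L_{\lambda_2}$ interchanges $\mathcal W^*$ and $-\mathcal W^*$) I would treat only the first in detail.

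First I would record the elementary but crucial fact that, since $c=\lambda_2$ forces $\theta_2(\lambda_2)=0$ in \eqref{eq-ltheta}, every $x\in\mathcal L_{\lambda_2}$ satisfies $\lng v^1,x\rng\ge\sqrt{\sum_{i\ge 3}\theta_i(\lambda_2)\lng v^i,x\rng^2}\ge0$; that is, $\mathcal L_{\lambda_2}$ is contained in the half-space $\{x:\lng v^1,x\rng\ge0\}$. Next, assuming $\mathcal K\cap-\mathcal L_{\lambda_2}=\{0\}$, I would use that $0\notin\inte(\mathcal K)$ (as $\mathcal K$ is proper, hence pointed) to get $\inte(\mathcal K)\cap(-\mathcal L_{\lambda_2})\subseteq(\mathcal K\cap-\mathcal L_{\lambda_2})\setminus\{0\}=\varnothing$. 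Consequently the $-\mathcal L_{\lambda_2}$ branch of $\mathcal W$ drops out, so $\mathcal W=\mathcal L_{\lambda_2}\cap\inte(\mathcal K)\subseteq\mathcal L_{\lambda_2}$; combining this with the half-space inclusion gives $\lng v^1,x\rng\ge0$ for every $x\in\mathcal W$, i.e. $v^1\in\mathcal W^*$, which is exactly what Lemma~\ref{Lem:Basic} requires.

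The step I would check most carefully is the role of the eigenvalue hypothesis $\lambda_2\le(\lambda_1+\lambda_3)/2$, which is equivalent to $\theta_i(\lambda_2)\ge1$ for all $i\ge 3$, and in particular forces $\lambda_3>\lambda_2$. This ensures that $\mathcal L_{\lambda_2}$ is a genuine (though non-pointed, with lineality the line $[v^2]$) elliptic cone, so that the conditions $\mathcal K\cap\pm\mathcal L_{\lambda_2}=\{0\}$ are geometrically meaningful, and it is precisely what makes the dual description \eqref{eq-o-w} usable if one prefers to argue through $\mathcal W^*=(\mathcal K^*+\mathcal L_{\lambda_2}^*)\cap(\mathcal K^*-\mathcal L_{\lambda_2}^*)$. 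On that route one checks $v^1\in\mathcal L_{\lambda_2}^*$ directly, since $v^1$ satisfies the dual elliptic inequality because $\lng v^i,v^1\rng=0$ for $i\ge 3$ (here $\theta_i>0$ is needed so $\mathcal L_{\lambda_2}^*$ is well defined), whence $v^1\in\mathcal K^*+\mathcal L_{\lambda_2}^*$, and one reads $v^1\in\mathcal K^*-\mathcal L_{\lambda_2}^*$ off $(\mathcal K\cap-\mathcal L_{\lambda_2})^*=\R^n$.

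I expect the main obstacle to be exactly this degeneracy at $c=\lambda_2$: because $\theta_2$ vanishes, $\mathcal L_{\lambda_2}$ is no longer pointed, so one must make airtight both the passage from $\mathcal K\cap-\mathcal L_{\lambda_2}=\{0\}$ to $\inte(\mathcal K)\cap-\mathcal L_{\lambda_2}=\varnothing$ and, on the dual route, the claim that the cone sum $\mathcal K^*-\mathcal L_{\lambda_2}^*$ is genuinely closed and equal to $\R^n$ rather than merely dense. The direct inclusion argument of the second paragraph avoids the latter difficulty entirely, so I would present it as the primary proof and mention \eqref{eq-o-w} only as an alternative.
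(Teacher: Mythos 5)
Your proof is correct, and it reaches the key membership $v^1\in\mathcal W^*$ by a genuinely different route than the paper. The paper's proof is a dual-cone computation: it introduces the auxiliary self-dual Lorentz cone $\mathcal L_{[v^2]^\perp}$ of the subspace $[v^2]^\perp$, uses the hypothesis $\lambda_2\le(\lambda_1+\lambda_3)/2$ (equivalently $\theta_i(\lambda_2)\ge1$ for $i\ge3$) to show that $\mathcal L_{\lambda_2}\cap[v^2]^\perp$ is subdual in $[v^2]^\perp$, proves the identity $\mathcal L_{\lambda_2}^*=(\mathcal L_{\lambda_2}\cap[v^2]^\perp)_*$, and then chains $v^1\in\mathcal L_{\lambda_2}\cap[v^2]^\perp\subseteq\mathcal L_{\lambda_2}^*\subseteq(\mathcal K\cap\mathcal L_{\lambda_2})^*=\mathcal W^*$. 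Your primary argument short-circuits all of this: the hypothesis $\mathcal K\cap-\mathcal L_{\lambda_2}=\{0\}$ together with $0\notin\inte(\mathcal K)$ empties the $-\mathcal L_{\lambda_2}$ branch of $\mathcal W$, and the surviving branch satisfies $\mathcal W\subseteq\mathcal L_{\lambda_2}\subseteq\{x:\langle v^1,x\rangle\ge0\}$ directly from the square root in \eqref{eq-ltheta}, whence $v^1\in\mathcal W^*$ and Lemma~\ref{Lem:Basic} finishes. Both arguments are valid, but observe that your direct argument never invokes $\lambda_2\le(\lambda_1+\lambda_3)/2$ --- your attempt to locate its role (ensuring the cone conditions are \emph{geometrically meaningful}) is not an actual use of it --- so you have in effect shown that the conclusion holds without that hypothesis; it is needed only to make the paper's dual computation go through, which in exchange yields the explicit description of $\mathcal L_{\lambda_2}^*$ as a by-product. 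Your concern about the degeneracy $\theta_2(\lambda_2)=0$ is likewise harmless on the primary route, since the half-space inclusion only needs the $\theta_i$ to be nonnegative; it would matter only on the alternative route through \eqref{eq-o-w}, which you rightly relegate to a remark.
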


\begin{proof}
	First note that if $n\ge 3$ and $\lambda_2\le (\lambda_1+\lambda_3)/2$, then $\theta_i(\lambda_2)\ge 1$ for any $i\ge 3$.  Define  the cone 
	\[
	 \mathcal L_{[v^2]^\perp}:=\lf\{x\in\R^n:~ ~\lng v^1,x\rng\ge\sqrt{\lng v^3,x\rng^2+\dots+\lng v^n,x\rng^2}\rg\}. 
	\]
 Note that $ \mathcal L_{[v^2]^\perp}$ is a self-dual Lorentz cone  as a subset of the subspace $[v^2]^\perp$.   Moreover,   considering that  $\theta_i(\lambda_2)\ge 1$ for any $i\ge 3$, we conclude $\mathcal L_{\lambda_2}\cap [v^2]^\perp\subset  \mathcal L_{[v^2]^\perp}$. Consequently,  taking into account  that  $ \mathcal L_{[v^2]^\perp}$ is a self-dual cone,  the cone  $\mathcal L_{\lambda_2}\cap [v^2]^\perp$ is    subdual  as a subset of the subspace $[v^2]^\perp$.   To simplify the notation,  denote by upper star (i.e., $^*$)
	the dual of a cone in $\mathbb R^n$ and by lower star (i.e., $_*$) the dual of a cone in $[v^2]^\perp$. Thus, using this notation we state
	\begin{equation} \label{eq;ssls}
	\mathcal
	L_{\lambda_2}^*=(\mathcal L_{\lambda_2}\cap [v^2]^\perp)_*
	\end{equation}
	 Indeed, since $v^2,-v^2\in\mathcal L_{\lambda_2}$, for 
	any $z\in\mathcal L_{\lambda_2}^*$, we have $\lng z,v^2\rng=0$ and hence $\mathcal L_{\lambda_2}^*\subseteq [v^2]^\perp$, which implies
	$\mathcal L_{\lambda_2}^*\subseteq (\mathcal L_{\lambda_2}\cap [v^2]^\perp)_*$. Conversely,  let  $u\in (\mathcal L_{\lambda_2}\cap [v^2]^\perp)_*$. Given $v\in\mathcal L_{\lambda_2}$,  take   $w\in\mathcal L_{\lambda_2}\cap [v^2]^\perp$ and  $t\in\R$ such that   $v=w+tv^2$. Hence,   $\lng u,v\rng=\lng u,w\rng\ge 0$, which implies that $u\in \mathcal L_{\lambda_2}^*$. Hence,  we conclude that 
	$(\mathcal L_{\lambda_2}\cap [v^2]^\perp)_*\subseteq\mathcal L_{\lambda_2}^*$, and \eqref{eq;ssls} is proved.  Next suppose $\mathcal K\cap -\mathcal L_{\lambda_2}=\{0\}$.  Hence,  by using the first equality in \eqref{eq-o-w} we obtain $\mathcal W^*=(\mathcal K\cap\mathcal L_{\lambda_2})^*$.  Therefore, considering  that  $\mathcal L_{\lambda_2}\cap [v^2]^\perp$ is    subdual  and \eqref{eq;ssls}, we obtain 
	 \[v^1\in\mathcal L_{\lambda_2}\cap[v^2]^\perp\subseteq (\mathcal L_{\lambda_2}\cap[v^2]^\perp)_*=\mathcal L_{\lambda_2}^*\subseteq
	(\mathcal K\cap\mathcal L_{\lambda_2})^*=\mathcal W^*.\] Hence, the result follows from Lemma \ref{Lem:Basic}.  The case $\mathcal K\cap {\cal L}_{\lambda_2}=\{0\}$ can be proved similarly.  
\qed \end{proof}
The next lemma is used in the proof of Proposition \ref{pr:multwo}, which is essential 
for proving Theorem \ref{th:best}. 
\begin{lemma}\label{lem:locnonconv}
	Let $n\ge 3$ and $B=B\tp\in\R^{n\times n}$. Let $\mu_1 \leq \mu_2 \leq  \ldots \leq \mu_n$ be  eigenvalues of the matrix $B$.  Assume that  $\mu_1\leq \mu_ 2<0<\mu_n$.  Then, for  any  vector ${\bar x}\in\R^n\setminus\{0\}$  such that $B{\bar x}\ne 0$ and  $\lf\lng B{\bar x},{\bar x}\rg\rng=0$,  and any number  $\delta>0$,   the    set 
	 $
	 \Xi\lf(B,{\bar x},\delta\rg):=\lf\{x\in\R^n:\lf\|x-{\bar x}\rg\|\le\delta,\textrm{ }\lng
	Bx,x\rng\le0\rg\}
	$
	 is not convex.
\end{lemma}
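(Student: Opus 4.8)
The plan is to regard $q(x):=\lng Bx,x\rng$ as a smooth function whose sublevel set $\{x:\lng Bx,x\rng\le 0\}$ has $\bar x$ as a \emph{regular} boundary point: indeed $q(\bar x)=0$ while $\n q(\bar x)=2B\bar x\neq 0$. Near such a point the convexity of the sublevel set is governed by the second-order behaviour of $q$ along the tangent hyperplane, so I would try to produce a tangent direction of \emph{negative curvature} and then build from it two points of $\Xi(B,\bar x,\delta)$ whose midpoint is pushed out of the set. Concretely, writing $u:=B\bar x$ (so $\|u\|^2>0$) and $T:=[u]^\pr$, the strategy is to find $v\in T$ with $\lng Bv,v\rng<0$ and then to perturb $\bar x$ in the directions $\pm v$ while simultaneously translating along $+u$ by a second-order amount.

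The key step, and the place where the hypothesis $\mu_2<0$ is essential, is the existence of such a $v$. Let $E_-$ denote the sum of the eigenspaces of $B$ associated with its negative eigenvalues; since $\mu_1\le\mu_2<0$ we have $\dim E_-\ge 2$, and $\lng Bw,w\rng<0$ for every $w\in E_-\setminus\{0\}$. Because $B\bar x\neq 0$, the tangent space $T=[B\bar x]^\pr$ has dimension $n-1$, so the count $\dim(E_-\cap T)\ge \dim E_-+(n-1)-n\ge 1$ yields a nonzero $v\in E_-\cap T$. This $v$ satisfies simultaneously $\lng B\bar x,v\rng=0$ (it is tangent) and $\beta:=\lng Bv,v\rng<0$ (negative curvature). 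I expect this dimension argument to be the main obstacle, in the sense that it is exactly what fails when $B$ has only one negative eigenvalue; the remaining hypothesis $\mu_n>0$ serves only to guarantee that points $\bar x$ with $B\bar x\neq 0$ and $\lng B\bar x,\bar x\rng=0$ exist at all, i.e.\ that $B$ is indefinite, so that the statement is not vacuous.

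With $v$ in hand I would finish by an explicit second-order construction. Put $\kappa:=-\beta/(4\|u\|^2)>0$ and, for small $t>0$, set $a(t):=\bar x+tv+\kappa t^2u$ and $b(t):=\bar x-tv+\kappa t^2u$, whose midpoint is $m(t)=\bar x+\kappa t^2u$. Expanding $q$ and using $\lng B\bar x,\bar x\rng=0$, $\lng B\bar x,v\rng=0$ and $\lng B\bar x,u\rng=\|u\|^2$, one finds $q(a(t))=\tfrac12\beta\,t^2+O(t^3)$, $q(b(t))=\tfrac12\beta\,t^2+O(t^3)$ and $q(m(t))=-\tfrac12\beta\,t^2+O(t^4)$; since $\beta<0$, for all sufficiently small $t>0$ both $a(t),b(t)$ satisfy $\lng Bx,x\rng\le 0$ while $q(m(t))>0$. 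As $\|a(t)-\bar x\|,\|b(t)-\bar x\|\to 0$, these points also lie in the ball $\|x-\bar x\|\le\delta$ once $t$ is small, so $a(t),b(t)\in\Xi(B,\bar x,\delta)$ whereas their midpoint $m(t)\notin\Xi(B,\bar x,\delta)$. Since $a(t)\ne b(t)$ (because $v\ne0$), this exhibits the failure of convexity and completes the proof, the only routine verifications being the two elementary Taylor expansions above.
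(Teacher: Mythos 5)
Your proof is correct, and while it shares the paper's key idea, the final construction is genuinely different and worth contrasting. Both arguments hinge on the same dimension count: the negative eigenspace has dimension at least $2$ (this is where $\mu_2<0$ enters), while $[B\bar x]^\perp$ has dimension $n-1$, so their intersection contains a nonzero vector that is simultaneously tangent to the level set at $\bar x$ and a direction of strictly negative curvature for $q$. From there the paper proceeds asymmetrically: it perturbs the tangent vector $a$ to nearby vectors $a^k$ in the negative eigenspace with $\langle a^k, B\bar x\rangle\neq 0$ (which requires the extra step of showing that the negative eigenspace is not contained in $[B\bar x]^\perp$), solves exactly for the step length $t_k$ at which $\bar x+t_k a^k$ returns to the zero level set, and shows that the midpoint of $\bar x$ and $p^k=\bar x+t_ka^k$ has positive quadratic value; a limiting argument then places $p^k$ inside the ball of radius $\delta$. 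You instead keep the exact tangent direction $v$ and add a symmetric second-order translation $\kappa t^2 u$ along $u=B\bar x$, so that the two points $\bar x\pm tv+\kappa t^2u$ and their midpoint are handled by two short Taylor expansions. Your route buys a cleaner, purely local argument: it dispenses with the sequence $\{a^k\}$ and, more substantively, with the auxiliary claim about the negative eigenspace versus $[B\bar x]^\perp$, which occupies a sizeable portion of the paper's proof (and whose stated justification there is slightly delicate, since what is actually used is non-containment rather than non-equality). The paper's version, in exchange, produces points lying exactly on the boundary $\langle Bx,x\rangle=0$, which is a mildly stronger geometric picture but is not needed for the lemma. The expansions you defer are indeed routine, and the choice $\kappa=-\beta/(4\|u\|^2)$ correctly balances the leading $t^2$ coefficients so that $q(a(t)),q(b(t))<0<q(m(t))$ for all small $t>0$.
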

\begin{proof}
Since  $\mu_1=\min_{x\in \SP^{n-1}} q_B(x) <\max_{x\in \SP^{n-1}} q_B(x)= \mu_ n$, we can take   ${\bar x}\in\R^n\setminus\{0\}$  such that $B{\bar x}\ne 0$ and  $\lf\lng B{\bar x},{\bar x}\rg\rng=0$. Define the following vector subspace  ${\mc N}:=[\{u\in \R^n:~Bu=\mu  u,\textrm{ for some }\mu<0\}]$ of $\R^{n}$.   It follows from assumption (a) or (b)  that $\dim({\mc N})\ge2$.  To  simplify  the notation  set ${\bar y}:=B{\bar x}\ne0$. To proceed with the proof, we first need to prove that ${\mc N}\neq [\bar y]^\perp$.  Assume to the contrary that ${\mc N}= [\bar y]^\perp$. In this case, due to ${\bar y}=B{\bar x}$ and $B=B\tp$, the definition  of  $[\bar y]^\perp$ implies that $ \lf\lng Bv, {\bar x}\rg\rng=0$, for all $v\in {\mc N}$. Thus, the definition  of ${\mc N}$  implies   $ \lf\lng v, {\bar x}\rg\rng=0$, for all $v\in {\mc N}$,  which yields    ${\mc N}\subset [\bar x]^\pr:=\{v\in \R^{n}:~  \lf\lng v,{\bar x}\rg\rng=0\}$. Moreover,  considering that $\lf\lng {\bar y},{\bar x}\rg\rng=0$,  we also have ${\bar y} \in [\bar x]^\pr$. Hence,  we conclude that $[\bar{y}]+\mathcal{N}\subset [\bar x]^\pr$. If  ${\bar y}\notin {\mc N}$, then due to ${\bar y}\neq0$ and  ${\mc N}= [\bar y]^\perp$ we have $\dim ([\bar{y}]+\mathcal{N})=n$. Having that   $[\bar{y}]+\mathcal{N}\subset [\bar x]^\pr$, we obtain  ${\bar x} = 0$, which contradicts the assumption ${\bar x} \neq 0$. Hence, ${\bar y} \in {\mc N}= [\bar y]^\perp$, which also contradicts ${\bar y} \neq 0$. Therefore, ${\mc N}\neq [\bar y]^\perp$.  Thus,  we have  
	\[
	\dim(\mc N\cap [{\bar y}]^\perp)\geq \dim{\mc N}+ \dim{ [{\bar y}]^\perp} -  \dim\R^n \ge 2+(n-1)-n=1.
	\]
Hence, there exists a unit vector $a\in\mc N$ such that $\lng a,{\bar y}\rng=0$.   Since $\mathcal{N}\ne [\bar{y}]^\perp$, we can choose a sequence
of vectors  $\{a^k\}\subset \mc N$  such that $\lim_{k\to\infty}a^k=a$ and  $\lng a^k,{\bar y}\rng\ne 0$.  Let $\{u^1, u^2,\dots,u^n\}$ be an orthonormal system of eigenvectors of the matrix $B$  corresponding to the eigenvalues $\mu_1 , \mu_2 ,  \ldots , \mu_n$, respectively. Note that the spectral decomposition of $B$ implies
$
B=\sum_{i=1}^n\mu_i u^i(u^i)\tp.
$
Since  $\{a^k\}\subset \mc N$, we have $a^k=\sum_{i=1}^\ell\alpha_{k, i}  u^i$, where $2\leq \ell=\dim(\mc N)<n$ and $\mu_1,  \ldots , \mu_\ell$ are the negative eigenvalues  of $B$. Thus, 
$
\lng Ba^k,a^k\rng=\sum_{i=1}^\ell\alpha^2_{k, i}  \mu_i<0.
$
For proceeding with the proof,  we define 
\[
 p^k:={\bar x}+t_ka^k, \qquad t_k:=-2\f{\lng a^k,{\bar y}\rng}{\lng Ba^k,a^k\rng}.
\] 
Then,
	$\lng Bp^k,p^k\rng=0$ and,  due to $\lng a,{\bar y}\rng=0$ and $\lim_{k\to\infty}a^k=a$, we have  $\lim_{k\to\infty}p^k={\bar x}$. Hence, if $k$ is sufficiently large, then for any  $\delta>0$ arbitrary but fixed, we have  $p^k\in\Xi\lf(B,{\bar x},\delta\rg)$. 
	For such an $k$,  after some simple algebraic manipulations we conclude 
	\[\lf\lng B\lf(\f{{\bar x}+p^k}2\rg),\f{{\bar x}+p^k}2\rg\rng=-\f{\lf\lng a^k,{\bar y}\rg\rng^2}{\lng Ba^k,a^k\rng}>0.
	\]
	Hence, ${\bar x},p^k\in\Xi\lf(B,{\bar x},\delta\rg)$, but $({\bar x}+p^k)/2\notin\Xi\lf(B,{\bar x},\delta\rg)$. Therefore, $\Xi\lf(B,{\bar x},\delta\rg)$ is not
	convex.   
\qed \end{proof}
\begin{proposition}\label{pr:multwo}
	Let $n\ge 3$ and $A=A\tp\in\R^{n\times n}$  is a nonsingular matrix.   Suppose that $q_A$  is    not constant  and   $\lambda_1 \leq \lambda_2 \leq  \ldots \leq \lambda_n$ are eigenvalues of $A$.  If $q_A$  is  quasiconvex, then the following conditions hold:
\begin{enumerate}
		\item[(i)] $\lambda_1 < \lambda_2$;
		\item[(ii)] either   $\lambda_2\leq \min_{x\in {\overline{\cal C}}} q_A(x)$  or  $\max_{x\in {\overline{\cal C}}} q_A(x)\leq\lambda_2$. 
	\end{enumerate}	
\end{proposition}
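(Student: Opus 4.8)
The plan is to argue both parts by contradiction, using Corollary \ref{cor:cor} to convert spherical quasi-convexity of $q_A$ into convexity of \emph{every} sublevel cone $[\varphi_A\le c]=\{x\in\mathcal K:\langle A_cx,x\rangle\le0\}$, and then manufacturing a single value $c$ for which Lemma \ref{lem:locnonconv} forbids that convexity. The bridge between the global and the local statements is the following observation, which I would isolate first: if $[\varphi_A\le c]$ is convex, then so is its intersection with any closed ball $\overline{B(\bar x,\delta)}$; moreover, whenever $\bar x\in\inte(\mathcal K)$ and $\delta>0$ is small enough that $\overline{B(\bar x,\delta)}\subset\inte(\mathcal K)$, the constraint $x\in\mathcal K$ becomes automatic on that ball, so this intersection coincides \emph{exactly} with the set $\Xi(A_c,\bar x,\delta)$ of Lemma \ref{lem:locnonconv}. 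Hence, to reach a contradiction it suffices to produce $c$ and a point $\bar x\in\mathcal C$ with $q_A(\bar x)=c$ such that $B:=A_c=A-c\,{\rm I_n}$ satisfies the spectral hypothesis $\mu_1\le\mu_2<0<\mu_n$ together with $B\bar x\ne0$ and $\langle B\bar x,\bar x\rangle=0$.

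I would always choose $c$ in the open interval $(\lambda_2,\lambda_n)$, since for such $c$ the eigenvalues $\lambda_i-c$ of $A_c$ obey $\lambda_1-c\le\lambda_2-c<0<\lambda_n-c$, which is precisely the requirement of Lemma \ref{lem:locnonconv}. To force $B\bar x\ne0$ I would additionally take $c$ outside the finite spectrum of $A$, so that $A_c$ is invertible and hence $A_c\bar x\ne0$ for every $\bar x\ne0$; and since $\bar x\in\mathcal C\subset\SP^{n-1}$ with $q_A(\bar x)=c$, the identity $\langle A_c\bar x,\bar x\rangle=q_A(\bar x)-c\|\bar x\|^2=0$ holds automatically. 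It then only remains to know that the chosen $c$ is genuinely attained by $q_A$ on the \emph{open} set $\mathcal C$. For this I would use that $\mathcal C$ is connected (being spherically convex) and trivially dense in its closure $\overline{\mathcal C}$, which is compact; writing $m:=\min_{\overline{\mathcal C}}q_A$ and $M:=\max_{\overline{\mathcal C}}q_A$, continuity and the intermediate value theorem yield $(m,M)\subset q_A(\mathcal C)\subset[m,M]$, and the non-constancy of $q_A$ gives $m<M$. The Rayleigh bounds $\lambda_1\le q_A\le\lambda_n$ on $\SP^{n-1}$ give $\lambda_1\le m$ and $M\le\lambda_n$.

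For part (i), suppose $\lambda_1=\lambda_2$. Then $m\ge\lambda_1=\lambda_2$ and $M\le\lambda_n$, so the nonempty open interval $(m,M)$ lies inside $(\lambda_2,\lambda_n)$; picking $c\in(m,M)$ off the spectrum of $A$ and $\bar x\in\mathcal C$ with $q_A(\bar x)=c$, the bridge together with Lemma \ref{lem:locnonconv} renders $[\varphi_A\le c]$ non-convex, contradicting Corollary \ref{cor:cor}. Thus $\lambda_1<\lambda_2$. For part (ii), suppose both alternatives fail, i.e.\ $m<\lambda_2<M$. Then $(\lambda_2,M)$ is a nonempty open subinterval of $(m,M)$ and, because $M\le\lambda_n$, also of $(\lambda_2,\lambda_n)$; choosing $c$ in it off the spectrum and $\bar x\in\mathcal C$ with $q_A(\bar x)=c$ produces the same contradiction. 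Hence $\lambda_2\le m$ or $M\le\lambda_2$, which is exactly statement (ii).

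The step that I expect to be the main obstacle is the bridge from the global convexity of $[\varphi_A\le c]$ to the purely local non-convexity supplied by Lemma \ref{lem:locnonconv}: one must verify that $c$ is attained strictly inside $\mathcal C$ (not merely on $\bdr\mathcal C$) and at a point that is \emph{not} an eigenvector of $A$, so that the local model set really is $\Xi(A_c,\bar x,\delta)$ with $A_c$ invertible and the ball contained in $\inte(\mathcal K)$. The connectedness-and-density argument for attainment, together with the choice of $c$ away from the spectrum, is what makes this rigorous; the remaining manipulations are routine bookkeeping with the Rayleigh bounds.
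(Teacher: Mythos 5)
Your proposal is correct and follows essentially the same route as the paper: argue by contradiction, pick $c\in(\lambda_2,\lambda_n)$ off the spectrum of $A$, locate $\bar x\in\mathcal C$ with $q_A(\bar x)=c$ by an intermediate-value argument, invoke Lemma~\ref{lem:locnonconv} for $B=A-c\,{\rm I_n}$, and contradict Corollary~\ref{cor:cor}. The only differences are cosmetic: the paper finds $\bar x$ via non-$\mathcal K$-copositivity of $\pm(A-c\,{\rm I_n})$ and a segment in $\inte(\mathcal K)$ rather than connectedness of $\mathcal C$, and it derives the final contradiction by exhibiting explicit points $a^0,a^1,a^s$ instead of your (cleaner) observation that $[\varphi_A\le c]\cap \overline{B(\bar x,\delta)}=\Xi(A_c,\bar x,\delta)$ would be convex.
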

\begin{proof}
 Suppose by contradiction that one of the  following two conditions holds:
	\begin{enumerate}
		\item[(a)] $\lambda_1= \lambda_2$;
		\item[(b)]  $\min_{x\in {\overline{\cal C}}} q_A(x) <\lambda_2<\max_{x\in {\overline{\cal C}}} q_A(x)$. 
	\end{enumerate}
First of all, note that due to  $q_A$ not being constant, we have   $\lambda_1\leq\min_{x\in {\overline{\cal C}}} q_A(x) <\max_{x\in {\overline{\cal C}}} q_A(x)\leq \lambda_ n$, where ${\overline{\cal C}}$ is defined in \eqref{eq:ccb}.  Hence, we can  take a $\mu \in \R$ such that $\mu \neq \lambda_i$ for all $i=1, \ldots n,$ and  satisfying 
\begin{equation} \label{eq:cdfff}
 \lambda_1=\lambda_ 2\leq\min_{x\in {\overline{\cal C}}} q_A(x)< \mu  <\max_{x\in {\overline{\cal C}}} q_A(x)\leq \lambda_n, 
\end{equation}
if the   condition (a) holds. Otherwise,  if the condition (b) holds, we take a $\mu \in \R$  satisfying 
\begin{equation} \label{eq:cdsss}
\lambda_1\leq\min_{x\in {\overline{\cal C}}} q_A(x)<\lambda_ 2<\mu< \max_{x\in {\overline{\cal C}}} q_A(x)\leq \lambda_n.
\end{equation}
	Then, any of the conditions \eqref{eq:cdfff} or \eqref{eq:cdsss} implies that  $\pm(A-\mu I_n)$ is not $\mc K$-copositive. Since, the matrix $A-\mu I_n$ is not $\mc K$-copositive,
	there exists a $p\in\mc K$ such that $\lng Ap,p\rng<\mu\|p\|^2$. Hence, there
	exist also an $u\in\inte(\mc K)$ sufficiently close to $p$ such that $\lng Au,u\rng<\mu\|u\|^2$. Similarly, since $-(A-\mu I_n)=\mu I_n-A$
	is not $\mc K$-copositive, there exists a $v\in\inte(\mc K)$ such that $\lng Av,v\rng>\mu\|v\|^2$. Therefore, by continuity,
	there exists a $t\in ]0,1[$ such that $\lng A{\bar x},{\bar x}\rng=\mu\|{\bar x}\|^2$, where ${\bar x}=(1-t)u+tv\in\inte(\mc K)$.  Letting $B=A-\mu I$, its eigenvalues  are given by $\mu_i:=\lambda_i-\mu$, for $i=1, 2, \ldots, n$. Thus,  we conclude from \eqref{eq:cdfff} and \eqref{eq:cdsss} that  
\begin{equation} \label{eq:cdfffa}
 \mu_1=\mu_ 2<0 < \mu_n,   \quad \mbox{or} \qquad \mu_1<\mu_ 2<0<\mu_n,  
\end{equation}
if   the condition (a) or (b) holds, respectively. Since  $B{\bar x}\ne 0$ and  $\lng B{\bar x},{\bar x}\rng=0$, we conclude from Lemma~\ref{lem:locnonconv} that,  for all $\delta>0$,  the set 
	$\Xi\lf(B,{\bar x},\delta\rg):=\lf\{x\in\R^n:\lf\|x-{\bar x}\rg\|\le\delta,\textrm{ }\lng Bx,x\rng\le0\rg\}, $
	  is not convex. Hence, there exists an $s\in ]0,1[$ and
	$a^0,a^1\in\Xi\lf(B,{\bar x},\delta\rg)$ such that $a^s:=(1-s)a^0+sa^1\notin\Xi\lf(B,{\bar x},\delta\rg)$. Thus, owing  to the ball of
	centre ${\bar x}$ and radius $\delta$  is   convex, $a^s\notin \Xi\lf(B,{\bar x},\delta\rg)$ implies 
	$\lng Aa^s,a^s\rng-\mu\|a^s\|^2=\lng Ba^s,a^s\rng>0$. On the other hand, since $a^0,a^1\in\Xi\lf(B,{\bar x},\delta\rg)$, we
	have $\lng Aa^i,a^i\rng-\mu\|a^i\|^2=\lng Ba^i,a^i\rng\le0$, for $i\in\{0,1\}$. Furthermore, if $\delta$ is sufficiently
	small, then considering that  ${\bar x}\in\inte(\mc K)$, we have $a^0,a^1\in\inte\mc K$. Hence,	$a^0,a^1\in[\varphi_A\le\mu]$ and
	$a^s\notin[\varphi_A\le\mu]$. By using Corollary~\ref{cor:cor}, this contradicts the spherical quasiconvexity of $A$.  
 \qed \end{proof}
\begin{lemma} \label{lem:Copositive}
 Let  $A \in\R^{n\times n}$ and $\lambda,c\in\R$ such that $\lambda\leq c$.  If  the matrix $\lambda  I_n-A$ is ${\cal K}$-copositive,   then  $[\varphi_A\leq c]=\inte ({\cal K})$. As a consequence, the set $[\varphi_A\leq c]$ is convex.
\end{lemma}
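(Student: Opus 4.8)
The plan is to read the conclusion straight off the copositivity hypothesis after unpacking the relevant definitions, so the argument is essentially a one-line translation. First I would recall that $\lambda I_n - A$ being ${\cal K}$-copositive means, by definition, that $\langle(\lambda I_n - A)x, x\rangle \geq 0$ for every $x \in {\cal K}$. Expanding the inner product and rearranging, this is exactly the statement $\langle Ax, x\rangle \leq \lambda\|x\|^2$ for all $x \in {\cal K}$.

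Next I would restrict this inequality to $\inte({\cal K})$. Since ${\cal K}$ is proper, hence pointed, the origin lies on the boundary of ${\cal K}$ and so $0 \notin \inte({\cal K})$; consequently every $x \in \inte({\cal K})$ has $\|x\| > 0$, and dividing the inequality above by $\|x\|^2$ is legitimate. This yields $\varphi_A(x) \leq \lambda$ for all $x \in \inte({\cal K})$, and invoking the hypothesis $\lambda \leq c$ upgrades this to $\varphi_A(x) \leq c$ on all of $\inte({\cal K})$. Because $[\varphi_A \leq c]$ is by definition the set of points $x \in \inte({\cal K})$ satisfying $\varphi_A(x) \leq c$, and we have just shown every such point qualifies, the two sets coincide: $[\varphi_A \leq c] = \inte({\cal K})$.

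For the concluding assertion I would simply note that the interior of a convex set is convex; as ${\cal K}$ is a proper convex cone, $\inte({\cal K})$ is convex, and therefore so is $[\varphi_A \leq c]$.

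There is no genuine obstacle in this lemma, which is why it serves as a clean building block for the earlier convexity results: the statement is a direct consequence of the definition of ${\cal K}$-copositivity. The only points that deserve a word of care are verifying that $0 \notin \inte({\cal K})$, so that the passage from the homogeneous inequality $\langle Ax, x\rangle \leq \lambda\|x\|^2$ to the Rayleigh-quotient bound $\varphi_A(x) \leq \lambda$ is valid, and keeping track that it is the hypothesis $\lambda \leq c$ that turns the bound $\varphi_A \leq \lambda$ into the required $\varphi_A \leq c$.
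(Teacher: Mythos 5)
Your proof is correct and follows essentially the same route as the paper: unpack the copositivity of $\lambda I_n - A$ to get $\langle Ax,x\rangle \leq \lambda\|x\|^2 \leq c\|x\|^2$ on ${\cal K}$, conclude $[\varphi_A\leq c]=\inte({\cal K})$, and note this set is convex. Your extra remark that $0\notin\inte({\cal K})$ (so division by $\|x\|^2$ is legitimate) is a harmless refinement the paper sidesteps by working with the quadratic form $\langle Ax,x\rangle - c\|x\|^2$ directly.
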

\begin{proof}
Since  $\lambda\leq c$, we have  $\langle A x,x \rangle-c\|x\|^2\leq  \langle A x,x \rangle-\lambda \|x\|^2=  \langle (A-\lambda I_n) x,x
\rangle$. Thus, considering that $\lambda  I_n-A$ is ${\cal K}$-copositive, we conclude that $\langle A x,x \rangle-c\|x\|^2\leq 0$, for all $x\in
\inte ({\cal K})$, which implies that  $[\varphi_A\leq c]=\{x\in  \inte ({\cal K}):~   \langle A x,x \rangle-c \|x\|^2\leq 0\}=\inte ({\cal K})$.   
\qed \end{proof}
\begin{theorem}\label{th:best}
Let $n\geq 3$, $k\ge1$, $A=A\tp\in\R^{n\times n}$ and  $\{v^1, v^2,\dots,v^n\}$ be an orthonormal system of eigenvectors of $A$  corresponding to
the eigenvalues $\lambda_1=\dots=\lambda_k<\lambda_{k+1}\leq \ldots \leq \lambda_n$, respectively.  Then, we have the following
statements:
\begin{enumerate}
	\item[(i)] If   $q_A$  is  quasiconvex and not constant, then $k=1$. 
	\item[(ii)]  If   $q_A$  is  quasiconvex and not constant, then either   $\lambda_2\leq \min_{x\in {\overline{\cal C}}} q_A(x)$  or  $\max_{x\in {\overline{\cal C}}} q_A(x)\leq\lambda_2$. 
	\item[(iii)] Suppose that $k=1$ and $\lambda_2I_n-A$ is $\mc K$-copositive. Then, $q_A$ is spherically quasiconvex if and only if 
		$v^1 \in {\cal W}^*\cup-{\cal W}^*$. In particular if $v^1 \in {\cal K}^*$, then $q_A$ is spherically quasiconvex.
	\item[(iv)] Suppose that  ${\cal K}^*= {\cal K}$  and $v^1\in {\cal K}$. Then, $q_A$ is spherically
		quasiconvex if and only if $k=1$ and $\lambda_2 I_n-A$ is ${\cal K}$-copositive. 
\end{enumerate}
\end{theorem}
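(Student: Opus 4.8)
The plan is to deduce all four statements from the results already established, treating (i)--(ii) together and then feeding them into (iii)--(iv). Parts (i) and (ii) are exactly the two conclusions of Proposition~\ref{pr:multwo}, rephrased through the hypothesis $\lambda_1=\dots=\lambda_k$; the only gap is that Proposition~\ref{pr:multwo} assumes $A$ nonsingular, while here $A$ may be singular. I would close this gap by shifting with the identity: for any $\alpha\in\R$ one has $q_{A+\alpha I_n}(x)=q_A(x)+\alpha$ on $\SP^{n-1}$, so the sublevel sets of $\varphi_{A+\alpha I_n}$ are those of $\varphi_A$ translated by $\alpha$; hence $q_{A+\alpha I_n}$ is spherically quasiconvex, and is nonconstant, precisely when $q_A$ is. Choosing $\alpha\notin\{-\lambda_1,\dots,-\lambda_n\}$ makes $\widetilde A:=A+\alpha I_n$ nonsingular, with eigenvalues $\lambda_i+\alpha$ (same eigenvectors and multiplicities) and with $\min_{x\in\overline{\cal C}}q_{\widetilde A}=\min_{x\in\overline{\cal C}}q_A+\alpha$, and likewise for the maximum. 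Applying Proposition~\ref{pr:multwo} to $\widetilde A$ gives $\lambda_1+\alpha<\lambda_2+\alpha$, i.e.\ $\lambda_1<\lambda_2$, which under $\lambda_1=\dots=\lambda_k$ forces $k=1$ and yields (i); and it gives one of the two alternatives for $\widetilde A$, which after subtracting $\alpha$ is exactly (ii).

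For part (iii) the idea is to invoke Corollary~\ref{cor:cor}, which reduces spherical quasiconvexity of $q_A$ to convexity of every sublevel set $[\varphi_A\le c]$, $c\in\R$, and then to split the range of $c$ at $\lambda_2$. For $c\ge\lambda_2$ the ${\cal K}$-copositivity of $\lambda_2 I_n-A$ together with Lemma~\ref{lem:Copositive} (taking $\lambda=\lambda_2$) gives $[\varphi_A\le c]=\inte({\cal K})$, which is convex; this covers in particular the interval $(\lambda_2,\lambda_n)$ left open by Lemma~\ref{Lem:Basic}. For $c\le\lambda_2$ (and redundantly $c\ge\lambda_n$), Lemma~\ref{Lem:Basic} asserts that convexity of $[\varphi_A\le c]$ for all such $c$ is equivalent to $v^1\in{\cal W}^*\cup-{\cal W}^*$. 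Since $\{c\le\lambda_2\}\cup\{c\ge\lambda_2\}=\R$, it follows that if $v^1\in{\cal W}^*\cup-{\cal W}^*$ then every sublevel set is convex, so $q_A$ is spherically quasiconvex; conversely, spherical quasiconvexity makes $[\varphi_A\le c]$ convex for $c\le\lambda_2$, which by Lemma~\ref{Lem:Basic} returns $v^1\in{\cal W}^*\cup-{\cal W}^*$. The ``in particular'' clause follows from the inclusion ${\cal K}^*\subseteq{\cal W}^*$ recorded in Lemma~\ref{Lem:Basic}.

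For part (iv), the ``if'' direction is immediate: if $k=1$ and $\lambda_2I_n-A$ is ${\cal K}$-copositive, then $v^1\in{\cal K}={\cal K}^*$, so the ``in particular'' clause of (iii) gives spherical quasiconvexity. For the ``only if'' direction, the standing hypothesis $\lambda_1<\lambda_{k+1}$ means $A$ has two distinct eigenvalues, so $q_A$ is not constant, and part (i) yields $k=1$, hence $\lambda_1<\lambda_2$. The decisive step is to pick the correct branch of (ii): since $v^1\in{\cal K}$ and $\|v^1\|=1$ we have $v^1\in{\cal K}\cap\SP^{n-1}=\overline{\cal C}$ with $q_A(v^1)=\lambda_1<\lambda_2$, so $\min_{x\in\overline{\cal C}}q_A(x)\le\lambda_1<\lambda_2$; this rules out the alternative $\lambda_2\le\min_{x\in\overline{\cal C}}q_A(x)$ and forces $\max_{x\in\overline{\cal C}}q_A(x)\le\lambda_2$. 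Rescaling from $\overline{\cal C}$ to all of ${\cal K}$ shows this last inequality is precisely the ${\cal K}$-copositivity of $\lambda_2I_n-A$. I expect the main obstacle to be the nonsingularity mismatch in the first step, where one must verify that quasiconvexity, nonconstancy, the extremal values and the eigenvalue gaps all transform consistently under $A\mapsto A+\alpha I_n$; a secondary point needing care is confirming in (iii) that the two $c$-ranges genuinely cover all of $\R$, so that convexity of every sublevel set is equivalent to the single condition $v^1\in{\cal W}^*\cup-{\cal W}^*$.
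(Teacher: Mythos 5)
Your proof is correct and follows essentially the same route as the paper: (i)--(ii) from Proposition~\ref{pr:multwo}, (iii) by combining Lemma~\ref{Lem:Basic}, Lemma~\ref{lem:Copositive} and Corollary~\ref{cor:cor} with the split of $c$ at $\lambda_2$, and (iv) by using $v^1\in{\cal K}$ to rule out the branch $\lambda_2\leq \min_{x\in {\overline{\cal C}}} q_A(x)$ of item (ii). Your shift $A\mapsto A+\alpha I_n$ to meet the nonsingularity hypothesis of Proposition~\ref{pr:multwo} is a small patch that the paper silently omits, but it does not change the approach.
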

\begin{proof}
	Items (i) and (ii) follow from Proposition \ref{pr:multwo}. Item (iii) follows from Lemma \ref{Lem:Basic},  Lemma \ref{lem:Copositive} and Corollary~\ref{cor:cor}.  
	Next, we prove item (iv). Suppose that $k=1$,  ${\cal K}^*= {\cal K}$ and $v^1\in {\cal K}$. If $\lambda_2 I_n-A$ 
	is ${\cal K}$-copositive, then the result follows from item (iii). If $q_A$ is spherically quasiconvex,
	then item (i) implies that $k=1$ and   item (ii) implies that either $A-\lambda_2 I_n$ or $\lambda_2 I_n-A$ is ${\cal K}$-copositive. If  
	$A-\lambda_2 I_n$ is ${\cal K}$-copositive, then $v^1\in {\cal K}$ implies 
	$\lng (A-\lambda_2 I_n)v^1,v^1\rng\ge0$. Hence, $\lambda_1\ge\lambda_2$, which contradicts $k=1$.
	Therefore, this case cannot hold and we must have $\lambda_2 I_n-A$ is ${\cal K}$-copositive.
 \qed \end{proof}
 The next corollary follows  by  combining  Lemma~\ref{lem:Copositive} and  Corollary~\ref{cor-conv-sl}. 
\begin{corollary}\label{cor-best}
Let $n\geq 3$, $A=A\tp\in\R^{n\times n}$ and   $\lambda_1<\lambda_{2}\leq \ldots \leq \lambda_n$  the eigenvalues of $A$.   Suppose that $\lambda_2\le
	(\lambda_1+\lambda_3)/2$ and $\lambda_2I_n-A$ is $\mc K$-copositive. If $\mathcal K\cap -\mathcal L_{\lambda_2}=\{0\}$ or $\mathcal
	K\cap \mathcal L_{\lambda_2}=\{0\}$, then $q_A$ is spherically quasiconvex.
\end{corollary}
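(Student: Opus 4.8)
The plan is to reduce everything to the sublevel-set criterion of Corollary~\ref{cor:cor}, which states that $q_A$ is spherically quasi-convex precisely when $[\varphi_A\le c]$ is convex for every $c\in\R$. I would therefore fix an arbitrary $c\in\R$ and verify convexity of $[\varphi_A\le c]$ by splitting the real line into the two complementary ranges $c\notin(\lambda_2,\lambda_n)$ and $c\in(\lambda_2,\lambda_n)$, which together exhaust $\R$.

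On the range $c\notin(\lambda_2,\lambda_n)$ I would simply quote Corollary~\ref{cor-conv-sl}. Its hypotheses coincide exactly with the standing assumptions here, namely $n\ge3$, $\lambda_2\le(\lambda_1+\lambda_3)/2$, and the dichotomy $\mc K\cap(-\mathcal L_{\lambda_2})=\{0\}$ or $\mc K\cap\mathcal L_{\lambda_2}=\{0\}$; hence $[\varphi_A\le c]$ is convex for every such $c$ with no extra effort. On the complementary range $c\in(\lambda_2,\lambda_n)$ I would invoke the copositivity hypothesis. Because $c>\lambda_2$ here, I can apply Lemma~\ref{lem:Copositive} with $\lambda=\lambda_2\le c$; using that $\lambda_2 I_n-A$ is $\mc K$-copositive, the lemma yields $[\varphi_A\le c]=\inte(\mc K)$, which is convex since $\mc K$ is a convex cone. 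Combining the two ranges shows $[\varphi_A\le c]$ is convex for all $c\in\R$, and Corollary~\ref{cor:cor} then delivers the spherical quasi-convexity of $q_A$.

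Since the argument is merely an assembly of two previously established results, there is no genuinely difficult step; the substantive content lives in Lemma~\ref{Lem:Basic} (which underlies Corollary~\ref{cor-conv-sl}) and in Lemma~\ref{lem:Copositive}. The only points warranting a moment's attention are confirming that the two $c$-ranges are disjoint and cover all of $\R$ (the degenerate case $\lambda_2=\lambda_n$ simply makes $(\lambda_2,\lambda_n)$ empty, leaving only the first range), and checking that each cited statement applies verbatim under the present hypotheses---in particular that the copositivity is posited for $\lambda_2 I_n-A$, exactly the form needed to run Lemma~\ref{lem:Copositive} with $\lambda=\lambda_2$.
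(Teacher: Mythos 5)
Your proposal is correct and is essentially the paper's own argument: the paper states that the corollary ``follows by combining Lemma~\ref{lem:Copositive} and Corollary~\ref{cor-conv-sl}'', and your write-up simply makes explicit how the two ranges $c\notin(\lambda_2,\lambda_n)$ and $c\in(\lambda_2,\lambda_n)$ are handled and how Corollary~\ref{cor:cor} closes the argument. No discrepancies.
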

In the following   two  theorems    we   present classes of quadratic quasi-convex functions defined  in
spherically subdual convex sets, which
include as particular instances \cite[Examples~18 and  ~19]{FerreiraNemethXiao2018}.
\begin{theorem} \label{th:countergg}
Let $n\geq  3$,     $A=A\tp\in\R^{n\times n}$ and $\{v^1, v^2,\dots,v^n\}$ be an orthonormal system of eigenvectors of the matrix  $A$  corresponding to the eigenvalues $\lambda_1 , \lambda_2 ,  \ldots , \lambda_n$ , respectively.  Moreover,   assume that   $\lambda:=\lambda_1$, $\mu:=\lambda_2=\ldots= \lambda_{n-1}$,  $\eta:=\lambda_n$ and
\begin{equation} \label{eq:ciqc}
 v^1-\sqrt{\frac{\eta-\mu}{\mu-\lambda}}|v^{n}|^{\cal K}\in {\cal K}^*,   \qquad  \lambda<\mu<\eta, 
\end{equation}
where $ |\cdot |^{\cal K}$ is defined in  \eqref{eq;npav}.	 Then,  the quadratic function    $q_A$ is spherically quasi-convex.
\end{theorem}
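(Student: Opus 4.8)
The plan is to reduce the somewhat opaque hypothesis \eqref{eq:ciqc} to the clean geometric statement that the cone $\mathcal K$ is contained in the (degenerate) elliptic cone $\mathcal L_{\lambda_2}=\mathcal L_\mu$, and then to read off from this inclusion the two ingredients needed to invoke Theorem~\ref{th:best}(iii): namely $v^1\in\mathcal K^*$ and the $\mathcal K$-copositivity of $\lambda_2 I_n-A=\mu I_n-A$. Recall that with the present spectrum $\lambda<\mu=\dots=\mu<\eta$ one has $\theta_i(\mu)=0$ for $i=2,\dots,n-1$ and $\theta_n(\mu)=(\eta-\mu)/(\mu-\lambda)$, so by \eqref{eq-ltheta} the cone $\mathcal L_{\lambda_2}$ is exactly $\{x:\lng v^1,x\rng\ge\sqrt{(\eta-\mu)/(\mu-\lambda)}\,|\lng v^n,x\rng|\}$. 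Thus the whole argument hinges on showing $\mathcal K\subseteq\mathcal L_\mu$, after which $k=1$ (because $\lambda_1<\lambda_2$) and Theorem~\ref{th:best}(iii) closes the proof.

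The key step, and the one I expect to use the subduality of $\mathcal K$ in an essential way, is the pointwise bound
\[
|\lng v^n,x\rng|\le \lng |v^n|^{\cal K},x\rng,\qquad x\in\mathcal K.
\]
To prove it I would use the Moreau decomposition \eqref{eq;cmdt}, writing $v^n=(v^n)^{\cal K}_+-(v^n)^{\cal K}_-$ with $(v^n)^{\cal K}_+\in\mathcal K$ and $(v^n)^{\cal K}_-\in\mathcal K^*$. Because $\mathcal K$ is subdual, $(v^n)^{\cal K}_+\in\mathcal K\subseteq\mathcal K^*$, so for $x\in\mathcal K$ both inner products $\lng(v^n)^{\cal K}_+,x\rng$ and $\lng(v^n)^{\cal K}_-,x\rng$ are nonnegative; since $\lng v^n,x\rng$ is their difference while $\lng |v^n|^{\cal K},x\rng$ is their sum, the bound follows. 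Feeding this into \eqref{eq:ciqc}, which asserts $\lng v^1,x\rng\ge\sqrt{(\eta-\mu)/(\mu-\lambda)}\,\lng|v^n|^{\cal K},x\rng$ for every $x\in\mathcal K$, yields $\lng v^1,x\rng\ge\sqrt{(\eta-\mu)/(\mu-\lambda)}\,|\lng v^n,x\rng|$, i.e.\ $x\in\mathcal L_\mu$. Hence $\mathcal K\subseteq\mathcal L_{\lambda_2}$.

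From this inclusion the two consequences are immediate. First, taking $x\in\mathcal K$ gives $\lng v^1,x\rng\ge0$, so $v^1\in\mathcal K^*$. Second, for $x\in\mathcal K$ the spectral expansion gives $\mu\|x\|^2-\lng Ax,x\rng=(\mu-\lambda)\lng v^1,x\rng^2-(\eta-\mu)\lng v^n,x\rng^2$, and the inclusion $\mathcal K\subseteq\mathcal L_\mu$ is precisely the statement that $(\mu-\lambda)\lng v^1,x\rng^2\ge(\eta-\mu)\lng v^n,x\rng^2$; thus $\lng(\mu I_n-A)x,x\rng\ge0$ on $\mathcal K$, i.e.\ $\lambda_2 I_n-A$ is $\mathcal K$-copositive. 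Since $\lambda_1<\lambda_2$ forces $k=1$, Theorem~\ref{th:best}(iii) now gives that $q_A$ is spherically quasi-convex, as claimed. Equivalently, one may bypass Theorem~\ref{th:best} and verify convexity of every sub-level set directly via Corollary~\ref{cor:cor}: for $c\le\lambda_2$ the membership $v^1\in\mathcal K^*\subseteq\mathcal W^*$ lets Lemma~\ref{Lem:Basic} apply, while for $c\ge\lambda_2$ the copositivity of $\mu I_n-A$ lets Lemma~\ref{lem:Copositive} apply, and these two ranges cover all $c\in\R$.

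The only genuinely nontrivial point is the displayed inequality $|\lng v^n,x\rng|\le\lng|v^n|^{\cal K},x\rng$; everything after it is bookkeeping with the spectral decomposition and an appeal to results already established. I would therefore isolate that inequality first, being careful that it is exactly the subduality hypothesis $\mathcal K\subseteq\mathcal K^*$ (not self-duality) that places $(v^n)^{\cal K}_+$ in $\mathcal K^*$ and hence keeps both inner products nonnegative.
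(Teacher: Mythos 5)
Your proof is correct and follows essentially the same route as the paper's: both arguments use the Moreau decomposition together with subduality to show that $\lng P_{\cal K}(v^n),x\rng$ and $\lng P_{{\cal K}^*}(-v^n),x\rng$ are nonnegative on $\cal K$, deduce from \eqref{eq:ciqc} that $v^1\in{\cal K}^*$ and that $\mu I_n-A$ is ${\cal K}$-copositive, and then invoke Theorem~\ref{th:best}(iii). The only cosmetic difference is that you prove $|\lng v^n,x\rng|\le\lng|v^n|^{\cal K},x\rng$ directly (and phrase the conclusion as ${\cal K}\subseteq{\cal L}_{\lambda_2}$), whereas the paper establishes the equivalent squared inequality by factoring $\lng v^n+|v^n|^{\cal K},x\rng\lng v^n-|v^n|^{\cal K},x\rng=-4\lng P_{\cal K}(v^n),x\rng\lng P_{{\cal K}^*}(-v^n),x\rng\le0$.
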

\begin{proof}
By using the spectral decomposition of $A$, we have
\begin{equation} \label{eq:equivA}
A=\sum_{i=1}^n\lambda_i v^i(v^i)\tp=\lambda v^1(v^1)\tp+ \sum_{j=2}^{n-1} \mu v^j(v^j)\tp+\eta v^n(v^n)\tp.
\end{equation}
Hence,  for all   $x\in{\cal K}$,  by using $\|x\|^2=\sum_{i=1}^n\lng v^i,x\rng^2$ and \eqref{eq:equivA}, after some calculations  we obtain
\begin{equation}\label{eq:ceq}
  \langle A x,x \rangle-\mu\|x\|^2= (\mu-\lambda) \left[\frac{\eta-\mu}{\mu-\lambda}\langle v^n, x\rangle^2-\langle v^1, x\rangle^2 \right].
\end{equation}
To procced with the proof we  note that  \eqref{eq;npav} implies that  $|v^n|^{\cal K}\in{\cal K}+{\cal K}^*$ and,  owing to  ${\cal K} \subseteq{\cal K}^*$, we conclude that   $|v^n|^{\cal K}\in {\cal K}^*$. Thus, \eqref{eq:ciqc} implies that 
\begin{equation}\label{eq:sdi}
0 \leq  \sqrt{\frac{\eta-\mu}{\mu-\lambda}} \lng |v^n|^{\cal K},x\rng\leq \langle v^1, x\rangle, \qquad \forall ~ x\in{\cal K}.
\end{equation}
Hence,  for all   $x\in{\cal K}$, the last inequality  yields 
\begin{equation}\label{eq:ie1}
\frac{\eta-\mu}{\mu-\lambda}\langle v^n, x\rangle^2 -\langle v^1, x\rangle^2 \leq \frac{\eta-\mu}{\mu-\lambda}\left[\lng v^n,x\rng^2-\lng|v^n|^{\cal K},x\rng^2\right] =\frac{\eta-\mu}{\mu-\lambda}\lng v^n+|v^n|^{\cal K},x\rng\lng v^n-|v^n|^{\cal K},x\rng.
\end{equation}
On the other hand, by using $|{v^{n}}|^{\cal K}={\rm P}_{\cal K}(v^{n})+{\rm P}_{{\cal K}^*}(-v^{n})$,  ${v^{n}}={\rm P}_{\cal K}({v^{n}})-{\rm P}_{{\cal
K}^*}(-{v^{n}})$, $P_{\cal K}(v^n)\in K\subseteq K^*$, we obtain $\lng v^n+|v^n|^{\cal K},x\rng\lng v^n-|v^n|^{\cal K},x\rng=-4\lng P_{\cal K}(v^n),x\rng\lng P_{{\cal K}^*}(-v^n),x\rng\leq 0$, for all   $x\in{\cal K}$. Thus, due to $\lambda<\mu<\eta$,  the previous inequality together  with  \eqref{eq:ie1}  implies 
\begin{equation}\label{eq:ie2}
	\frac{\eta-\mu}{\mu-\lambda}\langle v^n, x\rangle^2 -\langle v^1, x\rangle^2 \leq 0, \qquad \forall ~ x\in{\cal K}.
\end{equation}
	Thus,  considering  that $\lambda<\mu$,  the combination of   \eqref{eq:ceq} with   \eqref{eq:ie2}, implies that  $\mu {\rm I_n}-A$ is ${\cal
	K}$-copositive.  Taking into account that  $|v^n|^{\cal K}\in {\cal K}^*$, \eqref{eq:ciqc} implies $v^1\in{\cal K}^*$.
	Therefore, we can apply the item (iii) of Theorem~\ref{th:best}   to conclude that $q_A$ is  spherically quasi-convex. 
 \qed \end{proof}
 \begin{remark}
	The subduality of $\mathcal K$ is essential for the proof of 
Theorem \ref{th:countergg}. Indeed, suppose that the cone $\mathcal K$ is
not-selfdual and $\mathcal K^*$ is subdual (for example take a pointed closed convex cone which contains
the nonnegative ortant in its interior). Then, $\mathcal K^*\subsetneqq\mathcal K$, hence
$\mathcal K$ is
not subdual. Choose $\mathcal K$ such that $v^n\in\mathcal K\setminus\mathcal K^*$. It follows 
that $|v^n|^{\mathcal K}=v^n\notin\mathcal K^*$ and therefore the first inequality 
in \eqref{eq:sdi} fails. Hence, the proof of Theorem \ref{th:countergg} fails.
\end{remark}
The following example satisfies the assumptions of Theorem~\ref{th:countergg}.
\begin{example} \label{ex:countergg}
Letting  $ {\cal K}=\R^{n}_{+}$  and   $\lambda<(\lambda+\eta)/2<\mu<\eta$, the unit vectors    $v^1=(e^1+e^n)/\sqrt{2}$, $v^2=e^2$, $\ldots$,
 $v^{n-1}=e^{n-1}, v^n=(e^1-e^n)/\sqrt2$ are pairwise orthogonal  and  satisfy the condition  \eqref{eq:ciqc}. Now,  taking $\cal{K} = \cal{L}$
and denoting  $v^n = \lf((v^n)_1, (v^n)^2\rg)$, by using Lemma~\ref{l:projude},  \eqref{eq:ciqc} can be written as
$$
		v^1-\sqrt{\frac{\eta-\mu}{\mu-\lambda}}\f1{\|(v^n)^2\|}\Big{(} \max\left(|(v^n)_1|,\|{(v^n)^2}\|\right)\|(v^n)^2\|,~\min\left(|(v^n)_1|,\|{(v^n)^2}\|\right)\sgn((v^n)_1)(v^n)^2\Big{)}\in {\cal K},
$$
and $\lambda<\mu<\eta$.
 The vectors  $v^1=(e^1+e^n)/\sqrt{2}, v^2=e^2, ~ \ldots, ~ v^{n-1}=e^{n-1}, v^n=(-e^1+e^n)/\sqrt2$  are pairwise orthogonal  and   satisfy the last inclusion.
\end{example}
\begin{theorem} \label{th:CountExMulEngfm}
Let $n\geq  3$,     $A=A\tp\in\R^{n\times n}$ and $\{v^1, v^2,\dots,v^n\}$ be an orthonormal system of eigenvectors of $A$  corresponding to the
eigenvalues $\lambda_1 , \lambda_2 ,  \ldots , \lambda_n$, respectively, such that $v^1\in\inte({\cal K}^*)$.  Let
\begin{equation}\label{ineq:alpha}
	\alpha:=\min\{ \langle v^1, y\rangle^2:~y\in \SP^n\cap{\cal K}\}>0,\quad
	\eta:=\max\lf\{\frac{\langle v^3,y\rangle^2+...+\langle v^n,y\rangle^2}{\langle v^1,y\rangle^2}:~y\in S^n\cap{\cal K}\rg\}>0.
\end{equation}
Assume that
\begin{equation} \label{eq:CondEngfm}
 \qquad \lambda_1<\lambda_2\leq \cdots \leq \lambda_n \leq\lambda_2+\delta(\lambda_2-\lambda_1), \qquad \delta\in\{\alpha,1/\eta\}.
\end{equation}
Then,    $\lambda_{2} {\rm I_n}-A$ is ${\cal K}$-copositive. Consequently,  the quadratic function $q_A$ is spherically quasi-convex.
\end{theorem}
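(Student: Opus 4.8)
The plan is to prove the $\mathcal K$-copositivity of $\lambda_2 I_n - A$ directly from the spectral decomposition, and then to read off the spherical quasi-convexity of $q_A$ from item (iii) of Theorem~\ref{th:best}. Writing $A=\sum_{i=1}^n\lambda_i v^i(v^i)\tp$ and using $\|x\|^2=\sum_{i=1}^n\langle v^i,x\rangle^2$, the first step is to record the identity
\[
\langle(\lambda_2 I_n-A)x,x\rangle=\sum_{i=1}^n(\lambda_2-\lambda_i)\langle v^i,x\rangle^2=(\lambda_2-\lambda_1)\langle v^1,x\rangle^2-\sum_{i=3}^n(\lambda_i-\lambda_2)\langle v^i,x\rangle^2,
\]
where the $i=2$ term vanishes, the first summand is nonnegative because $\lambda_1<\lambda_2$, and the remaining terms carry a minus sign because $\lambda_i\ge\lambda_2$ for $i\ge 3$. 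Hence the desired copositivity reduces to the single inequality $(\lambda_2-\lambda_1)\langle v^1,x\rangle^2\ge\sum_{i=3}^n(\lambda_i-\lambda_2)\langle v^i,x\rangle^2$ for all $x\in\mathcal K$.

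Next I would feed in the hypothesis \eqref{eq:CondEngfm}, which gives $\lambda_i-\lambda_2\le\lambda_n-\lambda_2\le\delta(\lambda_2-\lambda_1)$ for every $i\ge 3$. Substituting this uniform bound and cancelling the strictly positive factor $\lambda_2-\lambda_1$, the whole problem collapses to verifying
\[
\langle v^1,x\rangle^2\ge\delta\sum_{i=3}^n\langle v^i,x\rangle^2,\qquad x\in\mathcal K.
\]
By positive homogeneity it suffices to check this for $x=y\in\SP^{n-1}\cap\mathcal K$. Before splitting into cases I would observe that $v^1\in\inte(\mathcal K^*)$ forces $\langle v^1,y\rangle>0$ on the compact set $\SP^{n-1}\cap\mathcal K$; this is precisely what guarantees that both $\alpha$ and $\eta$ in \eqref{ineq:alpha} are well-defined and strictly positive (finiteness of the maximum defining $\eta$ follows since its denominator is bounded below by $\alpha>0$).

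Finally I would dispatch the two admissible values of $\delta$. When $\delta=1/\eta$, the reduced inequality is nothing but the definition of $\eta$ rearranged, namely $\sum_{i=3}^n\langle v^i,y\rangle^2\le\eta\,\langle v^1,y\rangle^2$. When $\delta=\alpha$, I would instead argue crudely using the sphere constraint $\sum_{i=3}^n\langle v^i,y\rangle^2\le\|y\|^2=1$ together with $\langle v^1,y\rangle^2\ge\alpha$, so that $\alpha\sum_{i=3}^n\langle v^i,y\rangle^2\le\alpha\le\langle v^1,y\rangle^2$. Either way the inequality holds, so $\lambda_2 I_n-A$ is $\mathcal K$-copositive. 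Since $\lambda_1<\lambda_2$ means the least eigenvalue is simple (so $k=1$) and $v^1\in\inte(\mathcal K^*)\subseteq\mathcal K^*$, item (iii) of Theorem~\ref{th:best} then delivers the spherical quasi-convexity of $q_A$. The only genuinely delicate point is the well-definedness of $\eta$, which rests entirely on $\langle v^1,\cdot\rangle$ being bounded away from zero on $\SP^{n-1}\cap\mathcal K$; the rest is routine algebra organized around the reduction above.
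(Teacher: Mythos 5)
Your proposal is correct and follows essentially the same route as the paper: the same spectral identity for $\langle(\lambda_2 I_n-A)x,x\rangle$, the same uniform bound $\lambda_i-\lambda_2\le\lambda_n-\lambda_2\le\delta(\lambda_2-\lambda_1)$, the same two-case analysis on $\delta$, and the same appeal to item (iii) of Theorem~\ref{th:best}. The only cosmetic difference is in the $\delta=\alpha$ case, where you bound $\alpha\sum_{i=3}^n\langle v^i,y\rangle^2\le\alpha\le\langle v^1,y\rangle^2$ directly while the paper routes through the inequality $\eta\le 1/\alpha$; both are the same computation.
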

\begin{proof}
Note that the spectral decomposition of $A$ implies $A=\sum_{i=1}^n\lambda_i v^i(v^i)\tp$. Thus, considering that  $\|x\|^2=\sum_{i=1}^n\lng v^i,x\rng^2$, for all   $x\in{\cal K}$,  we conclude that
\begin{equation} \label{eq:equivnormA}
  \langle A x,x \rangle-\lambda_{2}\|x\|^2= \sum_{i=1}^n(\lambda_i-\lambda_{2})\langle v^{i}, x\rangle^2.
\end{equation}
	Since  \eqref{eq:CondEngfm}   implies   $\lambda_{2}-\lambda_1>0$ and   $0\leq \lambda_{j}-\lambda_{2}\leq \lambda_n-\lambda_{2}$,  for all $j=3, \ldots, n$, it follows   from  \eqref{eq:equivnormA}  that 
$$
  \langle A x,x \rangle-\lambda_{2}\|x\|^2 \leq (\lambda_{2}-\lambda_1)\left[ \frac{\lambda_n-\lambda_{2}}{\lambda_{2}-\lambda_1}\left( \langle
  v^{3}, x\rangle^2+ \cdots +   \langle v^n, x\rangle^2\right)-\langle v^1, x\rangle^2 \right].
$$
Since the  second equality  in \eqref{ineq:alpha} implies  $ \langle  v^{3}, x\rangle^2+ \cdots +   \langle v^n, x\rangle^2\leq \eta \langle v^1, x\rangle^2$, the latter inequality becomes
\begin{equation} \label{eq:coprfm}
 \langle A x,x \rangle-\lambda_{2}\|x\|^2 \leq (\lambda_2-\lambda_1)  \left[\lf(\eta\frac{\lambda_n-\lambda_{2}}{\lambda_{2}-\lambda_1}-1\rg)\langle v^1,x\rangle^2\right].
\end{equation}
First assume  that $\delta=1/\eta$. Thus,  the last inequality in  \eqref{eq:CondEngfm} implies $\eta (\lambda_n-\lambda_{2})/(\lambda_{2}-\lambda_{1})\leq 1$, which combined with     \eqref{eq:coprfm} yields 
\begin{equation} \label{eq;cpla}
  \langle A x,x \rangle-\lambda_{2}\|x\|^2 \leq 0.
\end{equation} 
Next, assume  that $\delta=\alpha$. First of all,  note that $\sum_{i=3}^n\lng v^i,y\rng^2\leq \sum_{i=1}^n\lng v^i,y\rng^2=\|y\|^2=1$, for all $y\in S^n$. Thus, using \eqref{ineq:alpha},  we conclude that 
\begin{eqnarray*}
	\eta=\max\lf\{\frac{\langle v^3,y\rangle^2+...+\langle v^n,y\rangle^2}{\langle v^1,y\rangle^2}:~ y\in S^n\cap K\rg\}
	\le \max\lf\{\frac1{\langle v^1,y\rangle^2}:y\in S^n\cap K\rg\}=\frac1\alpha.
\end{eqnarray*}
 Hence,   it follows from  \eqref{eq:coprfm}  that 
\begin{equation} \label{eq:sccp}
  \langle A x,x \rangle-\lambda_{2}\|x\|^2 \leq  (\lambda_2-\lambda_1)\left[\lf(\frac1\alpha\frac{\lambda_n-\lambda_{2}}{\lambda_{2}-\lambda_1}-1\rg)\langle v^1,x\rangle^2\right].
  \end{equation} 
Due  to  $\delta=\alpha$, the last inequality in  \eqref{eq:CondEngfm}  implies $ (\lambda_n-\lambda_{2})/[\alpha (\lambda_{2}-\lambda_{1})]\leq 1$, which together with \eqref{eq:sccp} also implies  \eqref{eq;cpla}.  Hence, we conclude  that
$\lambda_{2} {\rm I_n}-A$ is ${\cal K}$-copositive. Therefore, since $v^1\in {\cal K}^*$ and it  is an eigenvector  of
$A$ corresponding to the eigenvalue $\lambda_1$,  by applying item (iii) of Theorem~\ref{th:best}, we can conclude that the function $q_A$ is
spherically quasi-convex.  
 \qed \end{proof}
 \begin{remark}
	 The numbers $\alpha$ and $\eta$ have an analytical motivation for the
required copositivity. The number $\alpha$ is the cosine square of the maximal 
angle between the vector $v^1$ and the cone $\mathcal K$ (i.e., the maximal Euclidean angle between the
first eigenvector and the nonzero vectors of the cone). For the number $\eta$ consider the affine
hyperspace $\mathcal H=\{x\in\R^n:~\langle v^1,x\rangle=1\}$ and the $(n-2)$-dimensional linear subspace 
$\mathcal M=\{x\in\R^n:~\langle v^1,x\rangle=0\rangle,\textrm{ }\langle v^2,x\rangle=0\}$. Then, 
$\eta$ is the maximal length of a vector in the orthogonal projection of the slice
$\mathcal K\cap\mathcal H$ onto $\mathcal M$.
 \end{remark}
In the following we present an example   satisfying the assumptions of Theorem~\ref{th:CountExMulEngfm}.
\begin{example}\label{Ex:CountExMulEngfm}
 Letting ${\cal L}$ the Lorentz cone,  the vectors $v^i=e^i$, for all $i\in\{1, \ldots, n\}$ and  the eigenvectors $\lambda_1<\lambda_2\leq \ldots \leq
\lambda_n<\lambda_2 +(1/2)(\lambda_2-\lambda_1)$,   the  condition \eqref{eq:CondEngfm} is satisfied. In this case $\alpha=1/2$.
\end{example}

\begin{corollary} \label{cr:TwoEng}
 Let $n\ge 3$ and  $\{v^1, v^2,\dots,v^n\}$ be an orthonormal system of eigenvectors of  $A=A\tp\in\R^{n\times n}$ corresponding to the eigenvalues $\lambda_1 , \lambda_2 ,  \ldots ,\lambda_n$ , respectively.  Assume that $\lambda_1=:\lambda <\mu:=\lambda_2=\dots=\lambda_n$. If $v^1 \in {\cal K}^*$, then  $q_{A}$ is spherically quasi-convex.
\end{corollary}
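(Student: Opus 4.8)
The plan is to invoke item (iii) of Theorem~\ref{th:best}, so the whole task reduces to checking its two structural hypotheses. First I would record that the assumption $\lambda_1=\lambda<\mu=\lambda_2$ says precisely that the smallest eigenvalue is simple, i.e.\ $k=1$ in the notation of Theorem~\ref{th:best}. The hypothesis $v^1\in{\cal K}^*$ is given outright, so the only thing left to verify is the ${\cal K}$-copositivity of $\lambda_2 I_n-A$.

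The key step is a one-line computation exploiting the two-eigenvalue structure. Using the spectral decomposition $A=\lambda v^1(v^1)\tp+\mu\sum_{i=2}^n v^i(v^i)\tp$ together with the resolution of the identity $I_n=\sum_{i=1}^n v^i(v^i)\tp$ supplied by the orthonormal basis $\{v^1,\dots,v^n\}$, the terms indexed by $i\ge 2$ cancel, leaving
\[
\lambda_2 I_n-A=\mu I_n-A=(\mu-\lambda)\,v^1(v^1)\tp .
\]
Since $\mu-\lambda>0$ and $v^1(v^1)\tp$ is a rank-one positive semidefinite matrix, $\lambda_2 I_n-A$ is positive semidefinite; concretely $\langle(\lambda_2 I_n-A)x,x\rangle=(\mu-\lambda)\langle v^1,x\rangle^2\ge 0$ for every $x\in\R^n$, so in particular it is ${\cal K}$-copositive.

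With $k=1$ and $\lambda_2 I_n-A$ shown ${\cal K}$-copositive, and with $v^1\in{\cal K}^*$ in hand, the final clause of item (iii) of Theorem~\ref{th:best} immediately gives that $q_A$ is spherically quasi-convex. I do not anticipate any genuine obstacle: everything of substance is packaged in the quoted theorem, and the degenerate spectrum collapses the copositivity requirement to the manifestly nonnegative rank-one form $(\mu-\lambda)\langle v^1,\cdot\rangle^2$. As a consistency check one may observe that this corollary is exactly the boundary case of Theorem~\ref{th:countergg} in which the top eigenvalue $\eta$ coincides with $\mu$: there the coefficient $\sqrt{(\eta-\mu)/(\mu-\lambda)}$ vanishes and condition~\eqref{eq:ciqc} degenerates to $v^1\in{\cal K}^*$. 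Because Theorem~\ref{th:countergg} demands the strict inequality $\mu<\eta$, however, the direct copositivity route above is the cleaner derivation.
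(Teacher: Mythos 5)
Your proposal is correct and follows essentially the same route as the paper: both establish the identity $\langle(\mu I_n-A)x,x\rangle=(\mu-\lambda)\langle v^1,x\rangle^2\ge 0$ from the spectral decomposition to get ${\cal K}$-copositivity of $\lambda_2 I_n-A$, and then invoke item (iii) of Theorem~\ref{th:best} together with $v^1\in{\cal K}^*$. Your explicit rank-one formulation $\lambda_2 I_n-A=(\mu-\lambda)v^1(v^1)\tp$ and the remark relating the corollary to the degenerate case of Theorem~\ref{th:countergg} are pleasant additions but do not change the argument.
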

\begin{proof}
 Using the spectral decomposition of $A$, we have
\begin{equation} \label{eq:TwoEng}
	A= \lambda v^1(v^1)\tp+\sum_{j=2}^n\mu v^j(v^j)\tp.
\end{equation}
Since  $\|x\|^2=\sum_{i=1}^n\lng v^i,x\rng^2$,    for all   $x\in \R^{n}$,  by using \eqref{eq:TwoEng}  and $\lambda <\mu$, we  obtain 
\begin{equation} \label{eq:fetw}
\mu\|x\|^2-\langle A x,x \rangle= (\mu-\lambda)\langle v^{1}, x\rangle^2\geq 0, \qquad \forall x\in \R^{n}.
\end{equation} In particular,   \eqref{eq:fetw} implies that $\mu I_n-A$ is ${\cal K}$-copositive. Thus, since $ v^1 \in
{\cal K}^*$, by applying item (iii) of Theorem~\ref{th:best} with $\lambda_2=\mu$ we can conclude that the function $q_A$ is spherically
quasi-convex. 
 \qed \end{proof}
In the next example we show how to generate  matrices  satisfying the assumptions of Corollary~\ref{cr:TwoEng}
and consequently generate  spherically quasi-convex functions on  spherically subdual convex sets.
\begin{example}
	The Householder matrix associated to  $v\in\inte({\cal K}^*)$ is defined by $H:={\rm I_n}-2vv^{T}/\|v\|^2$. We know that   $H$ is a symmetric and nonsingular matrix. Furthermore,  $Hv=-v$ and  $Hu=u$ for all $u\in {\cal S}$, where  $ {\cal S}:=\{ u\in \R^n~:~ \langle v, u \rangle=0\}$.  Since the
dimension of ${\cal S}$ is $n-1$, then  we have  $1$ and $-1$ are  eigenvalues of $H$  with multiplicities $n-1$ and $1$, respectively.
Moreover,  considering that $v\in\inte({\cal K}^*)$, Corollary~\ref{cr:TwoEng} implies that   $q_{H}(x)=\langle Hx,x\rangle$ is spherically
quasi-convex.
\end{example}
\section{Spherically Quasi-Convex Quadratic Functions on the  Spherical  Lorentz Convex Set} \label{sec:qcqflc}
 In this section we present a  condition partially characterizing the  spherical   quasi-convexity of
 quadratic functions on  spherically convex  sets associated to the    Lorentz  cone. First,  we remark that for the Lorentz cone ${\cal L}$, since by Lemma \ref{lorcop} we have a characterization of ${\cal L}$-copositive matrices,
item (iii) of Theorem~\ref{th:best} provides a more general result than Theorem \ref{th:CountExMulEngfm}.
\begin{proposition}
Let ${\cal L}$ be the Lorentz cone, $n\geq  2$,  $A=A\tp\in\R^{n\times n}$,  $\lambda_1\le\lambda_2\leq...\leq\lambda_n$ be  the eigenvalues  of $A$, $v^1$ be   an eigenvector  of $A$    corresponding to   $\lambda_1$ and $J=\diag(1,-1,\dots,-1)\in\R^{n\times n}$.
	If $v^1\in {\cal L}$ and there exists an $\rho\ge0$ such that $\lambda_2I_n-A-\rho J$ is positive semidefinite, then $q_A$ is
	spherically quasiconvex.
\end{proposition}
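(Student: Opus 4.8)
The plan is to convert the stated spectral hypothesis into the copositivity condition that item (iii) of Theorem~\ref{th:best} consumes, and then to exploit the self-duality of the Lorentz cone. First I would invoke Lemma~\ref{lorcop} applied to the matrix $\lambda_2 I_n - A$: the assumption that $\lambda_2 I_n - A - \rho J$ is positive semidefinite for some $\rho \ge 0$ is precisely the statement that $\lambda_2 I_n - A$ is ${\cal L}$-copositive. Second, since ${\cal L}$ is self-dual we have ${\cal L} = {\cal L}^*$, so the hypothesis $v^1 \in {\cal L}$ reads $v^1 \in {\cal L}^* = {\cal K}^*$ once we set ${\cal K} = {\cal L}$ (which is admissible, since ${\cal L}$ is a proper self-dual, hence subdual, cone). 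These are exactly the two ingredients needed to run the machinery of Section~\ref{sec:qcqfcsd}.

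In the generic case $\lambda_1 < \lambda_2$ I would simply apply item (iii) of Theorem~\ref{th:best}: since $v^1 \in {\cal K}^*$, it yields the spherical quasiconvexity of $q_A$ at once. To cover the full range $n \ge 2$ (Theorem~\ref{th:best} being stated for $n \ge 3$), I would instead re-run its proof directly through Corollary~\ref{cor:cor}, checking convexity of every sublevel set $[\varphi_A \le c]$. For $c \notin (\lambda_2, \lambda_n)$ the ``in particular'' clause of Lemma~\ref{Lem:Basic}, applied with $v^1 \in {\cal K}^*$, gives convexity; for $c \ge \lambda_2$ the ${\cal L}$-copositivity of $\lambda_2 I_n - A$ together with Lemma~\ref{lem:Copositive} gives $[\varphi_A \le c] = \inte({\cal L})$, which is convex. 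These two ranges overlap and cover all $c \in \R$, so Corollary~\ref{cor:cor} delivers the conclusion uniformly in $n$.

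The one point requiring care --- and the place where the weak inequality $\lambda_1 \le \lambda_2$ in the hypothesis bites --- is the degenerate case $\lambda_1 = \lambda_2$, which both Theorem~\ref{th:best}(iii) and Lemma~\ref{Lem:Basic} exclude, as they require a simple smallest eigenvalue. Here the plan is to observe that the eigenvalues of $\lambda_2 I_n - A$ are $\lambda_2 - \lambda_i \le 0$, so $\lambda_2 I_n - A$ is negative semidefinite; combined with its ${\cal L}$-copositivity, the quadratic form $x \mapsto \lng (\lambda_2 I_n - A)x, x \rng$ must vanish on the open set $\inte({\cal L})$, and a quadratic form vanishing on a nonempty open set is identically zero. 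Hence $A = \lambda_2 I_n$, so $q_A$ is constant and therefore trivially spherically quasiconvex. I expect this degenerate-multiplicity case to be the main (indeed essentially the only) obstacle; the remainder is a direct assembly of Lemma~\ref{lorcop}, the self-duality ${\cal L} = {\cal L}^*$, and the sublevel-set lemmas already established.
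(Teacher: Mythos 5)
Your proof is correct and its core is exactly the paper's argument: Lemma~\ref{lorcop} converts the hypothesis on $\lambda_2 I_n-A-\rho J$ into ${\cal L}$-copositivity of $\lambda_2 I_n-A$, self-duality of ${\cal L}$ turns $v^1\in{\cal L}$ into $v^1\in{\cal K}^*$, and item (iii) of Theorem~\ref{th:best} concludes. The paper's proof stops there; you additionally, and correctly, patch the two cases that citation does not literally cover --- $n=2$ (by re-running the sublevel-set argument through Corollary~\ref{cor:cor}, Lemma~\ref{Lem:Basic} and Lemma~\ref{lem:Copositive}) and $\lambda_1=\lambda_2$ (where negative semidefiniteness plus ${\cal L}$-copositivity forces the quadratic form of $\lambda_2 I_n-A$ to vanish on the open set $\inte({\cal L})$, hence $A=\lambda_2 I_n$ and $q_A$ is constant) --- so your write-up is strictly more complete than the published one.
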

\begin{proof}
If  there exists an $\rho\ge0$ such that $\lambda_2I_n-A-\rho J$ is positive semidefinite, then it follows from Lemma~\ref{lorcop}  that $\lambda_2I_n-A$ is a ${\cal L}$-copositive matrix. Therefore, considering that $v^1\in {\cal L}={\cal L}^*$ and it  is an eigenvector  of
$A$ associated  to the eigenvalue $\lambda_1$,  by applying item (iii) of Theorem~\ref{th:best}, we  conclude that $q_A$ is
spherically quasi-convex.  
\qed \end{proof}
 The next result  is a version  of   \cite[Theorem 20]{FerreiraNemethXiao2018} for this  cone.
\begin{theorem}\label{thm:2neg-v}
 Let $n\ge 3$ and  $\{v^1, v^2,\dots,v^n\}$ be an orthonormal system of eigenvectors of  $A=A\tp\in\R^{n\times n}$ corresponding to the eigenvalues $\lambda_1 , \lambda_2 ,  \ldots ,\lambda_n$ , respectively.  Assume that $\lambda_1=:\lambda <\mu:=\lambda_2=\dots=\lambda_n$.  Then, $q_{A}$ is a spherically quasi-convex function if and only if $v^1\in {\cal L}$.  
\end{theorem}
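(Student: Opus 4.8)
The plan is to reduce the spherical quasi-convexity of $q_A$ to the convexity of all its sub-level sets via Corollary~\ref{cor:cor}, and then to read the answer directly off Lemma~\ref{Lem:Basic} after specializing its data to the present two-eigenvalue configuration. First I would check that the hypotheses match: here $\lambda_1=\lambda<\lambda_2=\mu$ and $\lambda_2=\dots=\lambda_n=\mu$, so the ordering $\lambda_1<\lambda_2\le\dots\le\lambda_n$ required by Lemma~\ref{Lem:Basic} holds, and moreover $\lambda_2=\lambda_n$, whence the open interval $(\lambda_2,\lambda_n)$ is empty. Consequently the clause ``for all $c\notin(\lambda_2,\lambda_n)$'' in Lemma~\ref{Lem:Basic} becomes ``for all $c\in\R$'', which is exactly the hypothesis in Corollary~\ref{cor:cor}. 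Combining the two, $q_A$ is spherically quasi-convex if and only if $v^1\in\mathcal W^*\cup-\mathcal W^*$, where $\mathcal W$ is the cone in \eqref{eq:cw} with $\mathcal K=\mathcal L$ (which is legitimate since $\mathcal L$ is self-dual, hence subdual and proper, matching the standing assumptions).

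The crux of the argument is then to compute $\mathcal W^*$ explicitly in this degenerate situation. Evaluating the coefficients in \eqref{eq-ltheta} at $c=\lambda_2=\mu$ gives $\theta_i(\lambda_2)=(\lambda_i-\mu)/(\mu-\lambda)=0$ for every $i=2,\dots,n$, since $\lambda_i=\mu$ there. Hence the defining inequality of $\mathcal L_{\lambda_2}$ collapses to $\lng v^1,x\rng\ge 0$, so that $\mathcal L_{\lambda_2}$ is simply the half-space $\{x\in\R^n:\lng v^1,x\rng\ge 0\}$ and $-\mathcal L_{\lambda_2}$ is the opposite half-space. Therefore $\mathcal L_{\lambda_2}\cup-\mathcal L_{\lambda_2}=\R^n$, and by \eqref{eq:cw} we obtain $\mathcal W=\R^n\cap\inte(\mathcal L)=\inte(\mathcal L)$. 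Passing to the dual and using that $\mathcal L$ is self-dual, this yields $\mathcal W^*=(\inte(\mathcal L))^*=\mathcal L^*=\mathcal L$, and likewise $-\mathcal W^*=-\mathcal L$.

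Putting the pieces together, $q_A$ is spherically quasi-convex if and only if $v^1\in\mathcal L\cup-\mathcal L$. Since $\lambda_1$ is a simple eigenvalue, the eigenvector $v^1$ is determined only up to sign; replacing $v^1$ by $-v^1$ if necessary, this condition is exactly $v^1\in\mathcal L$, which is the claim. For the sufficiency direction one may alternatively appeal directly to Corollary~\ref{cr:TwoEng}, as $\mathcal L=\mathcal L^*$. I expect the only delicate points to be bookkeeping ones: confirming that Lemma~\ref{Lem:Basic} still applies once the upper eigenvalue block collapses so that $(\lambda_2,\lambda_n)=\varnothing$, and recognizing that the elliptic cone $\mathcal L_{\lambda_2}$ degenerates into a half-space. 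It is precisely this degeneration that forces $\mathcal W^*$ to be the whole Lorentz cone, thereby pinning the characterization down to the single membership $v^1\in\mathcal L$.
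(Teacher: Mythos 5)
Your proof is correct, but it takes a genuinely different route from the paper's. The paper proves sufficiency exactly as you suggest in your aside (via Corollary~\ref{cr:TwoEng} and self-duality of $\mathcal L$), but for necessity it argues directly: assuming $q_A$ spherically quasi-convex, it invokes the $\mathcal K$-Z-property from Corollary~\ref{cor:qpcf} on the orthogonal boundary pair $x\in\partial\mathcal L\setminus\{0\}$ and $y=2x_1e^1-x\in\partial\mathcal L\setminus\{0\}$, computes $\langle Ax,y\rangle=(\lambda-\mu)\langle v^1,x\rangle\langle v^1,y\rangle\le 0$, and deduces (after normalizing $v^1_1\ge 0$) that $\langle v^1,x\rangle\ge0$ on all of $\partial\mathcal L$, hence $v^1\in\mathcal L^*=\mathcal L$. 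You instead obtain both directions at once from the general machinery of Section~\ref{sec:qcqfcsd}: since $\lambda_2=\lambda_n$ the interval $(\lambda_2,\lambda_n)$ is empty, Lemma~\ref{Lem:Basic} plus Corollary~\ref{cor:cor} give the equivalence with $v^1\in\mathcal W^*\cup-\mathcal W^*$, and the observation that $\theta_i(\lambda_2)=0$ degenerates $\mathcal L_{\lambda_2}$ in \eqref{eq-ltheta} into the half-space $\{x:\langle v^1,x\rangle\ge0\}$, so that $\mathcal W=\inte(\mathcal L)$ in \eqref{eq:cw} and $\mathcal W^*=\mathcal L$. Your computations check out (including $(\inte\mathcal L)^*=\mathcal L^*$, and the sign normalization of $v^1$, which the paper also performs implicitly via $v^1_1\ge0$). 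What the paper's argument buys is an elementary, self-contained necessity proof exploiting the reflection symmetry of the Lorentz cone; what yours buys is uniformity and extra generality: the identical degeneration argument works for any proper subdual cone $\mathcal K$, yielding the characterization $v^1\in\mathcal K^*\cup-\mathcal K^*$ when $\lambda_2=\dots=\lambda_n$, which upgrades Corollary~\ref{cr:TwoEng} to an equivalence well beyond the Lorentz case. The only point worth flagging is that your argument leans on the full strength of Lemma~\ref{Lem:Basic} (in particular its necessity half, whose proof is the most delicate part of the paper), whereas the paper's proof of Theorem~\ref{thm:2neg-v} avoids it entirely.
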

\begin{proof}
If  there exists an eigenvector  of $A$ corresponding  to the smaller eigenvalue belonging to   ${\cal L}$, then Corollary~\ref{cr:TwoEng} implies
that $q_{A}$ is spherically quasi-convex.  Conversely, assume that $q_{A}$ is spherically quasi-convex.  Thus, by using the spectral decomposition of $A$, we have
\begin{equation} \label{eq:VLVT}
	A= \lambda v^1(v^1)\tp+\sum_{j=2}^n\mu v^j(v^j)\tp.
\end{equation}
We can also assume,  without loss of generality,  that the number $v^1_ 1\geq 0$.
Let $x\in \p{\cal L}\setminus\{0\}$ and  note that    $y=2x_1e^1-x\in \p{\cal L}\setminus\{0\}$.  Since $\sum_{i=1}^nv^i(v^i)\tp=I_n$ (i.e., the spectral
decomposition of $I_n$) and $\langle x, y\rangle=0$, \eqref{eq:VLVT} implies that
\begin{equation}\label{eq:l}
	\lng Ax,y\rng=\lf\lng\lf[\mu\sum_{i=1}^nv^i(v^i)\tp+(\lambda-\mu)(v^1)(v^1)\tp\rg] x,y\rg\rng=(\lambda-\mu) \lng v^1,x\rng\lng v^1,y\rng.
 \end{equation}
 Since  $x, y\in {\cal L}$,  $\langle x, y\rangle=0$ and ${\cal L}$ is a self-dual cone, it follows  from  Corollary~\ref{cor:qpcf} that  $ \lng Ax,y\rng\leq 0$.
 Thus, considering that $\lambda <\mu$ and  $y=2x_1e^1-x$,  \eqref{eq:l} yields
\begin{equation}\label{ineq:l}
	0\leq  \lng v^1,x\rng\lng v^1,y\rng=\lng v^1,x\rng [2v^1_1x_1-\lng v^1,x\rng].
\end{equation}
On the other hand, due to  $x\in{\cal L}$, we have $x^1\ge0$. Thus,   since  $v_1^1\ge0$, if $\lng v^1,x\rng<0$, then we have $\lng
v^1,x\rng[(2v^1_1x_1-\lng v^1,x\rng]<0$, which
contradicts \eqref{ineq:l}. Hence $\lng v^1,x\rng\ge0$, where $x$ can be chosen arbitrarily in $\p{\cal L}\setminus\{0\}$. Therefore,
$v^1\in {\cal L}$ and the proof is complete. 
 \qed \end{proof}
\section{Perspectives and Open Problems} \label{sec:pop}
First of all, we note that  for all our classes of  spherically quasi-convex quadratic  functions $q_A$ on
the spherically subdual convex set  ${\cal C}=\SP^{n-1}\cap\inte({\cal K})$, the matrix $A$ has  the smallest eigenvalue  with  multiplicity one
and the associated eigenvector belongs  to the dual ${\cal K}^*$ of the subdual cone ${\cal K}$. We conjecture that this condition is necessary
and sufficient to characterize spherically quasi-convex quadratic  functions. It is worth
noting that for selfdual  cones  such a characterization can be obtained by proving that,  under the assumption  $v^1\in {\cal K}$,  the matrix  $\lambda_2 I_n-A$ is ${\cal K}$-copositive, where  $\{v^1, v^2,\dots,v^n\}$ is an orthonormal system of eigenvectors of $A$  corresponding to the eigenvalues $\lambda_1<\lambda_{2}\leq \ldots \leq \lambda_n$, respectively. We also remark  that, in Theorem~\ref{thm:2neg-v} we present a partial characterizations of spherically
quasi-convex quadratic functions on the  spherical Lorentz convex set.  However, the general question remains open even for this specific set.

Although efficient algorithms for solving intrinsically unconstrained problems posed on the 
whole  sphere are well known (see \cite{Hager2001, HagerPark2005,
Smith1994,So2011,Zhang2003,Zhang2012}),  an even more challenging problem is to develop  efficient algorithms for constrained quadratic optimization problems on spherically convex sets.  As far as we know, the first algorithm  for a special problem on  this subject has  recently appeared in \cite[Example 5.5.2]{Lange2016}; see also \cite[Section 10]{BauschkeBuiWang2018}.

 Minimizing a quadratic function on the intersection of the Lorentz cone with the sphere is particularly relevant, since the nonnegativity of the minimum value is equivalent to the Lorentz-copositivity of the corresponding matrix, see \cite{LoewySchneider1975,GajardoSeeger2013};  see also  \cite[Example 5.5.2]{Lange2016} and  \cite[Section 10]{BauschkeBuiWang2018}.  In general, replacing the Lorentz cone with an
arbitrary closed convex cone $K$ leads to the more general concept of ${\cal K}$-copositivity. By considering the intrinsic geometrical properties of the sphere, interesting perspectives for detecting the  general copositivity of matrices emerge.

Note that isometries map geodesics onto geodesics and   also preserve convex sets.  Moreover,  the
composition of convex and quasi-convex  functions  with isometries are also convex and
quasi-convex  functions, respectively.   Hence, all concepts studied in this paper are preserved
by isometries of the sphere. On the other hand, considering that the Euclidean space and the
sphere are not  isometric, there is no clear relationship  between concepts  of convexity and
quasi-convexity  of these  geometric spaces. On the other hand,  in principle, it is possible to
extend concepts similar to those studied in this paper to any submanifolds of the Euclidean space,
which naturally depend on curvatura/metric.   As a consequence, the following question arises: It
is possible to unify these concepts or part of them in a common framework? This is an interesting question that deserves to be investigated.

\section{Conclusions} \label{sec:Conclusions}
In \cite{FerreiraNemethXiao2018, FerreiraIusemNemeth2014, FerreiraIusemNemeth2013,  FerreiraNemeth2017} we studied some intrinsic properties of the spherically  convex sets and functions. In the present paper we showed further developments of this topic. In particular,  many  of the results obtained  in the prevous papers are related to the conditions implying spherical  quasi-convexity of quadratic functions on the spherical positive orthant. In the present paper these results are generalized to  subdual cones.  As far as we know this is the
pioneer study of spherically quasi-convex quadratic functions on  spherically  subdual convex sets.  As stated in Section~\ref{sec:pop},  there are still interesting questions to be answered in this topic,   we foresee further progress in  these direction in the near future.

\noindent{\bf Acknowledgements}

 This work was supported by CNPq (Grants  302473/2017-3 and 408151/2016-1) and FAPEG/PRONEM- 201710267000532.


\end{document}